\title[Length structures on manifolds with continuous Riemannian metrics]{Length structures on manifolds with continuous Riemannian metrics}
\date{\today}
\numberwithin{equation}{section}
\theoremstyle{plain}
\newtheorem{theorem}{Theorem}[section]
\newtheorem{corollary}[theorem]{Corollary}
\newtheorem{proposition}[theorem]{Proposition}
\newtheorem{lemma}[theorem]{Lemma}
\theoremstyle{definition}
\newtheorem{definition}[theorem]{Definition}
\newtheorem{example}[theorem]{Example}
\theoremstyle{remark}
\newtheorem{remark}[theorem]{Remark}
\newcommand{\id}{\operatorname{id}}
\newcommand{\Var}{\operatorname{Var}} % Variation
\def\R{\mathbb{R}}
\def\N{\mathbb{N}}
\def\eps{\varepsilon}
\def\ac{\mathrm{ac}}
\def\rec{\mathrm{rec}}
\def\loc{\mathrm{loc}}
\def\lip{\mathrm{lip}}
\def\A{\mathcal{A}}
\def\cC{\mathcal{C}}
\def\B{\mathcal{B}}
\newcommand\bel[1]{\begin{equation}\label{#1}}
\newcommand\ee{\end{equation}}
\begin{document}
\author[A.\ Burtscher]{Annegret Y.\ Burtscher}
\address{Laboratoire Jacques-Louis Lions, Universit\'e Pierre et Marie Curie (Paris 6), France, and Faculty of Mathematics, University of Vienna, Austria.
Current address: Mathematical Institute, University of Bonn, Endenicher Allee 60, 53115 Bonn, Germany. E-mail: \href{mailto:burtscher@math.uni-bonn.de}{burtscher@math.uni-bonn.de}.} %\href{mailto:annegret.burtscher@univie.ac.at}{annegret.burtscher@univie.ac.at}.}
%\urladdr{\href{http://annegretburtscher.wordpress.com}{http://annegretburtscher.wordpress.com}}
\thanks{Appears in \textit{New York Journal of Mathematics} (2015), see \url{http://nyjm.albany.edu/j/2015/21-12.html}.}

% \subjclass[2010]{53C20, 53C23}
% \keywords{length structures, curves, low-regularity Riemannian metrics, approximations of Riemannian metrics}

%%%%%%%%%%%%%%%%%%%%%%%%%%%%%%%%%%%%%%%%%%%%%%%%%%%%%%%%%%%%%%%%%%%%%%%%%%%%%%%%%%%%%%%%%%%%%%%%

\begin{abstract}
 It is well-known that the class of piecewise smooth curves together with a smooth Riemannian metric induces a metric space structure on a manifold. However, little is known about the minimal regularity needed to analyze curves and particularly to study length-mini\-mizing curves where neither classical techniques such as a differentiable exponential map etc.\ are available nor (generalized) curvature bounds are imposed. In this paper we advance low-regularity Riemannian geometry by investigating general length structures on manifolds that are equipped with Riemannian metrics of low regularity. We generalize the length structure by proving that the class of absolutely continuous curves induces the standard metric space structure. The main result states that the arc-length of absolutely continuous curves is the same as the length induced by the metric. For the proof we use techniques from the analysis of metric spaces and employ specific smooth approximations of continuous Riemannian metrics. We thus show that 
when dealing with lengths of curves, the metric approach for low-regularity Riemannnian manifolds is still compatible with standard definitions and can successfully fill in for lack of differentiability.
\end{abstract}

\maketitle

%%%%%%%%%%%%%%%%%%%%%%%%%%%%%%%%%%%%%%%%%%%%%%%%%%%%%%%%%%%%%%%%%%%%%%%%%%%%%%%%%%%%%%%%%%%%%%%%

\section{Introduction}

A Riemannian metric on a manifold is needed when considering geometric notions such as lengths of curves, angles, curvature and volumes. For several of these notions it is sufficient to work with the underlying metric space structure induced by the Riemannian metric and a length structure. In this paper we identify the optimal length structure on Riemannian manifolds compatible with the usual metric space structure as the class of absolutely continuous curves and subsequently investigate properties of the length structure of Riemannian manifolds of low regularity. By low regularity we think of Riemannian metrics of regularity less than $\mathcal{C}^{1,1}$. For such metrics the uniqueness of solutions to the geodesic equation is just known to hold~\cite{Har:unique}, but the exponential map is already not a local diffeomorphism anymore but only a bi-Lipschitz homeomorphism~\cite{KSS:exp,Min}. This thinness clearly effects the global structure of low-regularity Riemannian manifolds and it is our aim to seek out other tools 
that can fill in for lack of differentiability. For the purpose of this paper we employ techniques from the analysis of metric spaces.

Low-regularity Riemannian manifolds have already been studied in the literature, particularly in the context of metric geometry. A sequence of closed connected $n$-dimensional Riemannian manifolds $(M_i)_i$ with sectional curvature bounded from below and diameter bounded from above is known to have a subsequence (with respect to the Gromov--Hausdorff distance) converging to a metric space $M$, more precisely, an Alexandrov space with the same lower curvature bound~\cite{GP:manifolds}. Otsu and Shioya~\cite{OS:Alex} showed that an $n$-dimensional Alexandrov space $X$ inherits a $\mathcal{C}^0$-Riemannian structure on $X \setminus S_X$, where $S_X$ denotes the set of singular points in $X$. Whenever $X$ contains no singular points then it is an ordinary $\mathcal{C}^0$-Riemannian manifold. Earlier Berestovskii already showed that locally compact length spaces with curvature bounded from above and below and on which shortest paths can be extended locally are $\mathcal{C}^1$-manifolds with a continuous 
Riemannian metric. This result was improved to show H\"older continuity $\mathcal{C}^{1,\alpha}$ (for $0<\alpha<1$) of the metric components by Nikolaev using parallel translation (for both results see, e.g., \cite{ABN:genRie}). Approximations of such length spaces with curvature bounds by smooth Riemannian manifolds satisfying sectional curvature bounds in aggregate allow one to carry certain results of Riemannian geometry in large over to the metric situation~\cite{Nik:closure1}.
Riemannian manifolds with continuous Riemannian metrics have also been studied by Calabi and Hartman. They showed in \cite{CH:isometries} that isometries in this class of metrics are in general nondifferentiable (unless the Riemannian metrics are, for example, uniformly H\"older continuous).
The distance function on Lipschitz manifolds with Lipschitz Riemannian metrics (and its relations to Finslerian structures) has been studied by De Cecco and Palmieri~\cite{DeP:intdist, DeP:lip} in the 1990s.

With these results in mind we focus in this paper on manifolds with continuous Riemannian metrics. We present results that can be formulated using only the length structure of Riemannian manifolds.

The paper is organized as follows. In Section~\ref{section1}, we initiate our investigations by studying the length structure of manifolds equipped with smooth Riemannian metrics. We recall that the induced length $L_d$ is equal to the arc-length $L$ of curves in the standard setting where the exponential map plays an important role. In Section~\ref{section2}, we extend this result to the class $\A_\ac$ of absolutely continuous curves. We introduce the so-called variational topology on the class of absolutely continuous paths and show that in this topology the piecewise smooth paths are a dense subset. This implies that the class of absolutely continuous curves defines the same length structure as the class of piecewise smooth curves. For the sake of completeness we also show the equivalence of various notions of absolutely continuous curves on Riemannian manifolds used in different contexts in the literature. Finally, in Section~\ref{section3}, we consider manifolds with Riemannian metrics of low regularity.
 We focus here on manifolds that are equipped with a continuous Riemannian metric and whose induced metric therefore still induces the manifold topology. Our first result is that metrics induced by continuous Riemannian metrics are equivalent on compact sets. We proceed by demonstrating that the metric space structure of a manifold induced by a continuous Riemannian metric can be controlled by the metric space structure induced by smooth approximations of the Riemannian metric. In particular, we use such approximations to establish the equivalence of the metric derivative and the analytic derivative. This enables us to prove that $L = L_d$ holds also in the class $\A_\ac$ on any manifold with continuous Riemannian metric. 

%%%%%%%%%%%%%%%%%%%%%%%%%%%%%%%%%%%%%%%%%%%%%%%%%%%%%%%%%%%%%%%%%%%%%%%%%%%%%%%%%%%%%%%%%%%%%%%%

\section{Background}
\label{section1}

Let $M$ be a connected smooth manifold endowed with a smooth Riemannian metric $g$, i.e.\ $g_p$ varies smoothly in $p$ on $M$. Let us briefly recall the standard construction to assign a metric $d$ to $M$ via $g$. The class of piecewise smooth curves (with monotonous reparametrizations) on $M$ is denoted by $\A_\infty$. The length of a piecewise smooth curve\footnote{To increase readability, we always assume that an arbitrary path $\gamma\colon [a,b]\to M$ is reparametrized in a way that $a=0$ and $b=1$.} $\gamma\colon [0,1]\to M$ is defined by
\bel{LC1}
 L(\gamma) := \int_0^1 \| \gamma'(t) \|_g \, dt,
\ee
 where $\| v \|_g = \sqrt{g_p(v,v)}$ denotes the norm of $v \in T_pM$ with respect to $g$. The triple $(M,\A_\infty,L)$ defines a \emph{length structure\/} on the topological space $M$. The \emph{intrinsic metric\/} (or distance function) $d = d(g,\A_\infty,L)$ is assigned to $M$ by setting
\bel{d}
 d(p,q) := \inf \{ L(\gamma) \, | \, \gamma \in \A_\infty, \gamma(0) = p, \gamma(1) = q \}, \qquad p,q \in M.
\ee
 It is a standard result from Riemannian geometry that $(M,d)$ defines a metric space structure on $M$ that induces the manifold topology. More precisely, $(M,d)$ is a \emph{length space\/}, i.e.\ a metric space with intrinsic metric.

\medskip
Given an intrinsic metric $d$ it is generally not possible to uniquely reconstruct from knowledge of $d$ alone the length structure from which it was derived.
However, there is a natural way to associate a length structure $L_d$ to a given metric $d$, namely by approximating paths by ``polygons''.

\begin{definition}
 Let $(X,d)$ be a metric space and $\gamma\colon [0,1]\to X$ a (continuous) path in $X$. Then
\bel{Ld}
 L_d(\gamma) := \sup \Big\{ \sum_{i=1}^n d(\gamma(t_{i-1}),\gamma(t_i)) \, \Big| \, n \in \N, 0 = t_0 < t_1 < \ldots < t_n = 1 \Big\}
\ee
 is called the \emph{induced length\/} of $\gamma$.
\end{definition}

 A length structure on any metric space $(X,d)$ is therefore obtained by considering the class $\A_0$ of continuous curves and the induced length function $L_d$. This length structure gives rise to another metric $\hat d$, the intrinsic metric $d(X,\A_0,L_d)$, which is defined analogously to \eqref{d}. In general, $d$ and $\hat d$ do not induce the same topology (for examples see \cite[Ch.\ 1]{Gro:ms} and \cite[Sec.\ 2.3.3]{BBI}). 
 If we start out with a length space $(X,d)$, it is therefore interesting to ask underwhich conditions the original length structure $L$ and the induced length structure $L_d$ coincide. If $L$ and $L_d$ coincide wherever $L$ is defined, then $L_d$ serves as a natural extension of $L$ to $\A_0$.
 
\begin{theorem}\label{thm1}
 Let $M$ be a connected manifold with smooth Riemannian metric $g$. Then
\[
 L(\gamma) = L_d(\gamma), \qquad \gamma \in \A_\infty.
\]
\end{theorem}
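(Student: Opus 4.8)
The statement splits into the two inequalities $L_d(\gamma)\le L(\gamma)$ and $L(\gamma)\le L_d(\gamma)$. The first is immediate from the definitions: for any partition $0=t_0<\cdots<t_n=1$, the restriction $\gamma|_{[t_{i-1},t_i]}$ is itself an admissible curve in $\A_\infty$ joining $\gamma(t_{i-1})$ to $\gamma(t_i)$, so $d(\gamma(t_{i-1}),\gamma(t_i))\le L(\gamma|_{[t_{i-1},t_i]})$ by \eqref{d}. Summing over $i$ and using additivity of the arc-length \eqref{LC1} yields $\sum_{i=1}^n d(\gamma(t_{i-1}),\gamma(t_i))\le L(\gamma)$, and taking the supremum over all partitions gives $L_d(\gamma)\le L(\gamma)$.

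For the reverse inequality I would show that on sufficiently fine partitions each chord distance $d(\gamma(t_{i-1}),\gamma(t_i))$ captures almost all of the arc-length of the corresponding subarc. Fix $\eta\in(0,1)$. Since $\gamma([0,1])$ is compact and $g$ is continuous, I can cover the image by finitely many normal coordinate charts and extract a uniform radius $\rho>0$ and a mesh threshold $\delta>0$ with the following property: whenever $|t-s|<\delta$, the points $\gamma(s),\gamma(t)$ lie in a common chart centered at $p:=\gamma(s)$ in which $g$ differs from the frozen (constant) metric $g_p$ by a factor at most $\eta$, that is $(1-\eta)\|v\|_{g_p}\le\|v\|_{g_q}\le(1+\eta)\|v\|_{g_p}$ for all $q$ in the chart and all $v$. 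The key local estimate is then
\[
 d(\gamma(s),\gamma(t))\ \ge\ (1-\eta)\,\|\gamma(t)-\gamma(s)\|_{g_p},
\]
where $\|\gamma(t)-\gamma(s)\|_{g_p}$ is the coordinate chord measured in the constant metric $g_p$. To see this, any competitor path joining the two points either stays inside the chart, in which case its $g$-length is at least $(1-\eta)$ times its $g_p$-length, which in turn is at least the straight-line distance $\|\gamma(t)-\gamma(s)\|_{g_p}$; or it escapes the chart, in which case (choosing the mesh small enough that $\|\gamma(t)-\gamma(s)\|_{g_p}$ is a small fraction of $\rho$) its $g$-length already exceeds $(1-\eta)\rho\ge\|\gamma(t)-\gamma(s)\|_{g_p}$.

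It then remains to pass to the arc-length. For a partition of mesh $<\delta$ with base points $p_i=\gamma(t_{i-1})$, a Taylor expansion of the smooth curve gives $\|\gamma(t_i)-\gamma(t_{i-1})\|_{g_{p_i}}=\|\gamma'(t_{i-1})\|_{g_{p_i}}(t_i-t_{i-1})+O((t_i-t_{i-1})^2)$, so the sum $\sum_i\|\gamma(t_i)-\gamma(t_{i-1})\|_{g_{p_i}}$ is a Riemann sum for $\int_0^1\|\gamma'(t)\|_g\,dt=L(\gamma)$ up to an error vanishing as the mesh tends to $0$. Combining this with the local estimate, for every $\eps>0$ I can choose a partition so fine that
\[
 L_d(\gamma)\ \ge\ \sum_{i=1}^n d(\gamma(t_{i-1}),\gamma(t_i))\ \ge\ (1-\eta)\sum_{i=1}^n\|\gamma(t_i)-\gamma(t_{i-1})\|_{g_{p_i}}\ \ge\ (1-\eta)\big(L(\gamma)-\eps\big).
\]
Letting $\eps\to0$ and then $\eta\to0$ yields $L(\gamma)\le L_d(\gamma)$, and together with the first inequality this proves the theorem.

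The main obstacle is the local lower bound on $d$: because $d$ is a global infimum over all of $\A_\infty$, one must rule out that a distance-minimizing competitor gains by leaving the coordinate chart, which is exactly where the escape argument (and, in the smooth setting, the fact that short radial geodesics are minimizing inside the injectivity radius) enters. Everything else is routine bookkeeping with Riemann sums and the uniform continuity of $g$ on compact sets. I would note that phrasing the local estimate via the frozen metric $g_p$ rather than via an exact normal-coordinate distance formula is deliberate, since this comparison survives when $g$ is merely continuous and thus anticipates the low-regularity treatment in Section~\ref{section3}.
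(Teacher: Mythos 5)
Your proof is correct, but it follows a genuinely different route from the paper's. The paper proves $L\le L_d$ by localizing at a single point of differentiability: it uses the exponential map $\exp_{\gamma(t)}$ as a radial isometry near $\gamma(t)$ to show that the metric derivative $\lim_{\delta\to 0^+}d(\gamma(t),\gamma(t+\delta))/\delta$ equals $\|\gamma'(t)\|_{g_{\gamma(t)}}$ (equation \eqref{gammaprime}), then squeezes $\frac{d}{dt}L_d(\gamma|_{[0,t]})=\|\gamma'(t)\|_g$ a.e.\ and integrates by the fundamental theorem of calculus. You instead work globally along a fine partition: freeze the metric at the base points $p_i$, prove the chord lower bound $d(\gamma(t_{i-1}),\gamma(t_i))\ge(1-\eta)\|\gamma(t_i)-\gamma(t_{i-1})\|_{g_{p_i}}$ via the stay-inside-or-escape dichotomy, and compare the chord sum to a Riemann sum for $L(\gamma)$. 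The trade-off is instructive. Your argument never invokes the exponential map, so it applies verbatim when $g$ is merely continuous --- precisely the property the paper can only recover in Section~\ref{section3} through smooth approximation of the metric (Propositions~\ref{prop5} and \ref{prop4}); your frozen-metric comparison is essentially the same device the paper itself uses in Proposition~\ref{prop3} and Theorem~\ref{thm4}. On the other hand, the paper's computation yields the identity between metric and analytic derivatives (Corollary~\ref{cor2}) as a reusable by-product, and its a.e.-differentiation-plus-FTC scheme is what extends to absolutely continuous curves (Theorem~\ref{thm3} via Lemma~\ref{lem3}); your Taylor/Riemann-sum step leans on the (piecewise) smoothness, or at least uniform continuity of the derivative, of the curve itself, since for a merely absolutely continuous $\gamma$ chords need not capture the arc uniformly on a mesh, so your method would not extend to $\A_\ac$ without reverting to an a.e.\ argument of the paper's type. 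Two minor points of hygiene: the escape alternative should be phrased relative to a coordinate ball of uniform radius $\rho$ contained in the chart (which is clearly what you intend), and your partitions should be taken to include the finitely many breakpoints of the piecewise smooth curve --- harmless, since refining a partition only increases the chord sum.
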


\begin{proof}
 ($L_d \leq L$) This is true for any length space, by definition of $L_d$, $d$ and the additivity of $L$.

 ($L \leq L_d$) Let $\gamma\colon [0,1] \to M$ and $t \in (0,1)$ such that $\gamma'(t)$ exists. The exponential map $\exp_{\gamma(t)}$ defines a diffeomorphism on a neighborhood $U$ of $\gamma(t)$. Let $\delta>0$ such that $\gamma([t-\delta,t+\delta]) \subseteq U$. Then,
 \[
\frac{1}{\delta} \, d(\gamma(t),\gamma(t+\delta)) = \frac{1}{\delta} \, \left\|\exp_{\gamma(t)}^{-1}(\gamma(t+\delta))\right\|_{g_{\gamma(t)}} = \left\|\frac{1}{\delta} \, \exp_{\gamma(t)}^{-1}(\gamma(t+\delta))\right\|_{g_{\gamma(t)}},
\] 
 and the metric derivative of $\gamma$ satisfies
 \begin{align}
  \lim_{\delta \rightarrow 0+} \frac{d(\gamma(t),\gamma(t+\delta))}{\delta} &= \left\| \left. \frac{d}{d\delta} \right|_0 \exp^{-1}_{\gamma(t)}(\gamma(t+\delta)) \right\|_{g_{\gamma(t)}} \notag \\
  &= \Big\| (\underbrace{T_0 \exp_{\gamma(t)}}_{\id})^{-1} (\gamma'(t)) \Big\|_{g_{\gamma(t)}} \notag \\
  &= \left\| \gamma'(t) \right\|_{g_{\gamma(t)}}. \label{gammaprime}
 \end{align}
 Moreover,
 \begin{align} \label{eq:proof1}
  \frac{1}{\delta} \, d(\gamma(t),\gamma(t+\delta)) &\leq 
\frac{1}{\delta} \, L_d(\gamma|_{[t,t+\delta]})
\leq \frac{1}{\delta} \int_t^{t+\delta} \| \gamma'(s) \|_{g_{\gamma(t)}} ds
 \end{align}
 by the first part of the proof. Both sides of this inequality converge to $\| \gamma'(t) \|_{g_{\gamma(t)}}$ as $\delta$ tends to $0$. Similarly for $t-\delta$. Note that the intermediate term in \eqref{eq:proof1} may be written as $\frac{1}{\delta} \, L_d(\gamma|_{[t,t+\delta]}) = \frac{1}{\delta} \left( L_d(\gamma|_{[0,t+\delta]}) - L_d(\gamma|_{[0,t]}) \right)$. Thus for almost all $t \in (0,1)$ we obtain that
 \begin{equation*}\label{eq:d1}
  \frac{d}{dt} L_d(\gamma|_{[0,t]}) = \| \gamma'(t) \|_{g_{\gamma(t)}}.
 \end{equation*}
 The fundamental theorem of calculus therefore yields
 \[ L_d(\gamma) = L_d(\gamma) - \underbrace{L_d(\gamma|_{[0,0]})}_{=0} = \int_0^1 \frac{d}{dt} L_d(\gamma|_{[0,t]}) dt = \int_0^1 \| \gamma'(t)\|_{g_{\gamma(t)}} dt = L(\gamma). \qedhere \] 
\end{proof}

\begin{corollary}\label{thm1cor}
 Under the assumptions of Theorem~\ref{thm1},
\[
 d = \hat d,
\]
 where $\hat d = d(M,\A_\infty,L_d)$ is the induced intrinsic metric on $M$. \qed 
\end{corollary}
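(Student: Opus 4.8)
The plan is to deduce the corollary directly from Theorem~\ref{thm1} by splitting the claimed equality into the two inequalities $\hat d \le d$ and $d \le \hat d$, each of which reduces to a short observation.

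First, for $\hat d \le d$, I would note that every piecewise smooth curve is in particular continuous, so $\A_\infty \subseteq \A_0$, and hence the infimum defining $\hat d$ over the (a priori larger) admissible class cannot exceed the corresponding infimum restricted to $\A_\infty$. On $\A_\infty$, Theorem~\ref{thm1} identifies $L_d$ with $L$, so this restricted infimum is exactly $d(p,q)$ as given in \eqref{d}. Combining these observations yields $\hat d(p,q) \le d(p,q)$ for all $p,q \in M$.

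For the reverse inequality $d \le \hat d$, I would invoke only the definition of $L_d$ in \eqref{Ld}: evaluating the supremum on the trivial partition $0 = t_0 < t_1 = 1$ shows that $L_d(\gamma) \ge d(\gamma(0),\gamma(1))$ for every continuous curve $\gamma$. Thus every admissible curve from $p$ to $q$ has induced length at least $d(p,q)$, and taking the infimum over all such curves gives $\hat d(p,q) \ge d(p,q)$.

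There is no serious obstacle here: the corollary is a formal consequence of Theorem~\ref{thm1} together with the elementary monotonicity of $L_d$ under refinement of partitions. The only point requiring a moment's care is the bookkeeping of curve classes, namely that enlarging the admissible class from $\A_\infty$ to $\A_0$ can only decrease the infimum, whereas the pointwise identity $L = L_d$ is known only on $\A_\infty$; the two inequalities therefore genuinely pull in opposite directions and both must be invoked. If instead one reads the admissible class underlying $\hat d$ as $\A_\infty$ itself, as the displayed formula in the statement suggests, then the claim collapses to a direct substitution of $L_d = L$ from Theorem~\ref{thm1} and the corollary is immediate.
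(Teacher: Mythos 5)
Your proposal is correct. The paper itself gives no written argument: the corollary carries a \qed{} in its statement because, with $\hat d = d(M,\A_\infty,L_d)$ taken over the class $\A_\infty$ as displayed, the infimum defining $\hat d$ runs over exactly the same curves as the infimum defining $d$, and Theorem~\ref{thm1} identifies $L_d(\gamma)=L(\gamma)$ term by term---this is precisely the ``direct substitution'' you describe in your final paragraph, and it is all the paper intends. Your two-inequality argument proves something slightly stronger, namely the same identity when the admissible class underlying $\hat d$ is enlarged to the class $\A_0$ of all continuous curves, which is in fact how $\hat d$ was introduced in the discussion preceding the corollary (as $d(X,\A_0,L_d)$). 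Both of your inequalities are sound: $\hat d \le d$ follows from $\A_\infty \subseteq \A_0$ together with $L_d = L$ on $\A_\infty$, and $d \le \hat d$ follows from the trivial-partition bound $L_d(\gamma) \ge d(\gamma(0),\gamma(1))$ applied to every continuous competitor. This stronger formulation buys a genuine dividend: it shows that passing to the largest sensible class of curves does not change the intrinsic metric, so $L_d$ is a legitimate extension of the length structure beyond $\A_\infty$ without altering $d$---a theme the paper only develops later, for the class $\A_\ac$, through Corollary~\ref{cor1} and Theorem~\ref{thm3}.
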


%%%%%%%%%%%%%%%%%%%%%%%%%%%%%%%%%%%%%%%%%%%%%%%%%%%%%%%%%%%%%%%%%%%%%%%%%%%%%%%%%%%%%%%%%%%%%%%%

\section{More general classes of curves on Riemannian manifolds}
\label{section2}

 Let $(M,g)$ again be a connected smooth Riemannian manifold, and $d$ be the distance function induced by the class $\A_\infty$ of piecewise smooth curves on $M$. By Theorem~\ref{thm1}, the length of piecewise smooth curves $\gamma$ is given by 
\[
 L(\gamma) = L_d(\gamma).
\]
 Note that the right hand side of this equation is well-defined for larger classes of curves, indeed $L_d(\gamma)$ makes sense for continuous curves $\gamma$ on $M$. We are therefore interested to also understand the left hand side in more general cases. In particular, we ask for what maximal class $\mathcal{B}$ of curves is $L(\gamma)$ well-defined and equal to $L_d(\gamma)$. We will see that $\mathcal{B}$ is the class of absolutely continuous curves on $M$, and that the metric induced by $\mathcal{B}$ satisfies $d(g,\mathcal{B},L) = d(g,\A_\infty,L)$.

%-----------------------------------------------------------------------------------------------

\subsection{Rectifiable curves}

 A continuous path $\gamma\colon I \to M$ is called \emph{rectifiable\/} (or of bounded variation) if $L_d(\gamma) < \infty$. We denote the class of rectifiable curves by $\A_\rec$.

 For $M = \R$ these curves are functions of bounded (pointwise) variation and usually are denoted by $\text{BV}(I)$. Each $\gamma \in \text{BV}(I)$ is differentiable a.e.\ in $I$ and satisfies
\[
 L(\gamma) = \int_I |\gamma'(t)| \, dt \leq \Var_I(\gamma) = L_d(\gamma),
\]
 with equality if and only if $\gamma$ is absolutely continuous on $I$ due to the fundamental theorem of calculus.
 As such, if $(M,g)$ is Euclidean with standard metric, the fact that $d=d_\mathrm{ac}$ is an immediate consequence.
 Since the fundamental theorem of calculus was also required in the proof of Theorem~\ref{thm1} and Corollary~\ref{thm1cor}, the class of absolutely continuous curves is a natural candidate for $\mathcal{B}$. Much of this Section~\ref{section2}, in particular Corollary~\ref{cor1}, can be seen as a generalization of the Euclidean setting to Riemannian manifolds with metrics of low regularity. 

\begin{example} \label{ex:Cantor}
 The Cantor function $\Gamma$ illustrates that absolute continuity is really necessary. Namely, the graph $\gamma = (\id,\Gamma)$ of the Cantor function is a continuous function of bounded variation (and hence differentiable a.e.\ with $\gamma'(t)=(1,0)$), but satisfies
\[
 L(\gamma) = \int_0^1 |\gamma'(t)| \, dt = 1 \neq 2 = \Var_{[0,1]}(\gamma) = L_d(\gamma).
\]
\end{example}

%-----------------------------------------------------------------------------------------------

\subsection{Absolutely continuous curves}

 The importance of absolutely continuous functions for geometric questions has already been discovered in the second half of the last century (see \cite{Gro:ms,Pet:rg,Rin:ig} and others). There are various ways to define absolutely continuous curves on $\R$, normed spaces, and metric spaces in general. We will confine ourselves to the most convenient definition for differentiable manifolds, and prove the equivalence to other notions on smooth Riemannian manifolds at the end of this section. Our notion of absolute continuity stems from the standard notion for real-valued functions and a generalization to Banach space-valued functions~\cite{Dub:ac,Kli:rg}.

\begin{definition}\label{def:acR}
 Let $I \subseteq \R$ be an interval (or an open set). A function $f\colon I \to \R^n$ is said to be \emph{absolutely continuous\/} on $I$ (for short, AC on $I$) if for all $\eps>0$ there exists $\delta>0$, such that for any $m \in \N$ and any selection of disjoint intervals $\{(a_i,b_i)\}_{i=1}^m$ with $[a_i,b_i] \subseteq I$, whose overall length is $\sum_{i=1}^m |b_i - a_i| < \delta$, $f$ satisfies
\[
 \sum_{i=1}^m | f(b_i) - f(a_i) | < \eps,
\]
 where $| . |$ (without subindex) denotes the standard Euclidean norm on $\R^n$.

 If $f\colon I \to \R^n$ is absolutely continuous on all closed subintervals $[a,b] \subseteq I$, then it is called \emph{locally absolutely continuous\/} on $I$. 
 The spaces of AC functions and locally AC functions are denoted by $AC(I,\R^n)$ and $AC_\loc(I,\R^n)$, respectively.
\end{definition}

 Note that $AC(I,\R^n) = AC_\loc(I,\R^n)$ if $I$ is a closed interval. The function $f(x) = \frac{1}{x}$, however, is locally absolutely continuous but not absolutely continuous on $(0,1)$.

\begin{definition}\label{def:ac}
 Let $I \subseteq \R$ be a closed interval and $M$ be a connected manifold. A path $\gamma\colon I \to M$ is called \emph{absolutely continuous} on $M$ if for any chart $(u,U)$ of $M$ the composition \[u \circ \gamma \colon
 \gamma^{-1}(\gamma(I) \cap U) \to u(U) \subseteq \R^n\] is locally absolutely continuous.

 The class of absolutely continuous curves on $M$ (with monotonous reparametrizations) is denoted by $\A_\ac$.
\end{definition}

We show that Definitions~\ref{def:acR} and \ref{def:ac} coincide on $\R^n$.

\begin{proposition}
 If $M =  \R^n$, then the notions of absolutely continuous curves in \ref{def:acR} and \ref{def:ac} coincide, i.e.\ $AC(\R^n) = \A_\ac ( \R^n)$.
\end{proposition}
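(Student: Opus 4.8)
The plan is to prove the two inclusions separately, exploiting that $\R^n$ carries the global identity chart while its smooth structure admits arbitrary charts $(u,U)$, each of which is a $C^\infty$ diffeomorphism between open subsets of $\R^n$ and hence in particular locally Lipschitz.

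For $AC(\R^n) \subseteq \A_\ac(\R^n)$, I would start from $\gamma\colon I \to \R^n$ absolutely continuous in the sense of Definition~\ref{def:acR} and fix an arbitrary chart $(u,U)$. Since $\gamma$ is continuous and $U$ is open, the domain $\gamma^{-1}(\gamma(I)\cap U) = \gamma^{-1}(U)$ is open in $I$, so it suffices to verify absolute continuity of $u\circ\gamma$ on an arbitrary closed subinterval $[a,b] \subseteq \gamma^{-1}(U)$. The image $\gamma([a,b])$ is a compact subset of $U$, and because $u$ is $C^1$ it is Lipschitz on $\gamma([a,b])$, say with constant $L$. Given $\eps>0$, absolute continuity of $\gamma$ on $[a,b]$ provides a $\delta>0$ such that $\sum_i |\gamma(b_i)-\gamma(a_i)| < \eps/L$ whenever the $(a_i,b_i)$ are disjoint with total length below $\delta$; the Lipschitz estimate then yields $\sum_i |u(\gamma(b_i))-u(\gamma(a_i))| \le L\sum_i |\gamma(b_i)-\gamma(a_i)| < \eps$. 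Hence $u\circ\gamma$ is AC on $[a,b]$, thus locally AC on $\gamma^{-1}(U)$, and since the chart was arbitrary, $\gamma \in \A_\ac(\R^n)$.

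For the reverse inclusion $\A_\ac(\R^n) \subseteq AC(\R^n)$, I would specialize the hypothesis to the global identity chart $(\id,\R^n)$. Then the domain is all of $I$ and $\gamma = \id\circ\gamma$ is locally absolutely continuous on $I$. As $I$ is a closed interval, local absolute continuity coincides with absolute continuity (the remark following Definition~\ref{def:acR}), so $\gamma \in AC(\R^n)$.

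The technical heart is the Lipschitz claim in the first inclusion: a $C^1$ map need not admit a single Lipschitz constant on a compact set from local Lipschitz continuity alone, so I would pass to a compact neighborhood $K_r = \{x : \mathrm{dist}(x,\gamma([a,b])) \le r\} \subseteq U$ with $r>0$ small, bound the operator norm of the differential $Du$ on $K_r$, and combine the mean-value estimate for points within distance $r$ with a diameter bound for points farther apart to produce the constant $L$. Everything else reduces to the routine facts that composition of an AC curve with a locally Lipschitz map preserves local absolute continuity and that AC and local AC agree on closed intervals.
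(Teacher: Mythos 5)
Your proof is correct, and your forward inclusion $AC(\R^n) \subseteq \A_\ac(\R^n)$ is essentially the paper's argument: the paper simply notes that $u$, being smooth, is locally Lipschitz, hence $u\circ\gamma$ is locally absolutely continuous; your compact-neighborhood construction of a Lipschitz constant just makes this explicit. (In fact the technical worry you raise resolves itself by exactly the argument you sketch: a locally Lipschitz map \emph{is} Lipschitz on every compact subset of its open domain, via the Lebesgue-number/diameter-bound splitting, so the paper's one-line appeal to local Lipschitz continuity is sound.) Where you genuinely diverge is the reverse inclusion $\A_\ac(\R^n) \subseteq AC(\R^n)$. The paper covers $\gamma(I)$ by arbitrary charts $(u_i,U_i)$, writes $\gamma = u_i^{-1}\circ u_i\circ\gamma$ on each piece to get local absolute continuity there, and then needs a patching result (Lemma~\ref{lem1}, proved with a partition of unity) to pass from local absolute continuity on an open cover of $I$ to absolute continuity on all of $I$. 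You instead exploit the special feature of $M=\R^n$ that the identity is itself a chart in the maximal atlas: since Definition~\ref{def:ac} quantifies over \emph{all} charts, applying it to $(\id,\R^n)$ gives directly that $\gamma = \id\circ\gamma$ is locally absolutely continuous on $I$, and local absolute continuity coincides with absolute continuity on a closed interval. This is shorter and avoids Lemma~\ref{lem1} entirely; what the paper's longer route buys is an argument that never uses the existence of a global chart --- so it simultaneously exhibits the chart-independence built into Definition~\ref{def:ac} and would carry over to a general manifold --- and it puts the patching Lemma~\ref{lem1} and its corollary on record for independent use.
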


\begin{proof}
 ($AC \subseteq \A_\ac$) Suppose $\gamma \in AC(I,\R^n)$, and let $(u,U)$ be a chart. Since $u$ is smooth it is locally Lipschitz and hence $u \circ \gamma$ is locally absolutely continuous.

 ($\A_\ac \subseteq AC$) Let $\gamma \in \A_\ac$ and $\{ (u_i,U_i) \}_i$ be a set of charts that cover $\gamma(I) \subseteq \R^n$. By definition, all concatenations $\left. u_i \circ \gamma \right|_{\gamma^{-1}(\gamma(I) \cap U_i)}$ are locally absolutely continuous. Again, $u_i^{-1}$ being locally Lipschitz implies that $\gamma_i := \left.\gamma\right|_{\gamma^{-1}(\gamma(I) \cap U_i)} = u_i^{-1} \circ u_i \circ \gamma$ as functions from $\gamma^{-1}(\gamma(I)\cap U_i) \subseteq I$ to $\R^n$ are (componentwise) locally absolutely continuous. Therefore, $\gamma \in AC(I,\R^n)$ by Lemma~\ref{lem1} below.
\end{proof}

\begin{lemma}\label{lem1}
 Let $I \subseteq \R$ be a closed interval, $f\colon I \to \R^n$ be a function and $\mathcal{J} = \{ J_i \}_i$ be an open cover of $I$. If all $\left.f\right|_{J_i}$ are locally absolutely continuous, then $f$ is absolutely continuous on $I$ in the sense of Definition~\ref{def:acR}.
\end{lemma}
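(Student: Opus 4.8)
The plan is to exploit the compactness of the closed interval $I$ (the lemma is only true for bounded closed intervals, as $f(x)=x^2$ on $\R$ shows, so I read $I=[a,b]$ as compact) to reduce the given cover to a finite one, localize the absolute continuity estimate on a finite partition of $I$ into pieces each sitting inside a single cover element, and then reassemble the global estimate by a triangle-inequality argument. Absolute continuity is, after all, a property that localizes on compact intervals, and this is precisely the content being formalized here.

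First I would invoke compactness of $I$ to extract a finite subcover $J_1,\ldots,J_N$ of $\{J_i\}_i$. Associated to this finite cover of the compact metric space $I$ there is a Lebesgue number $\lambda>0$, i.e.\ every subset of $I$ of diameter less than $\lambda$ lies in some single $J_i$. Choosing a partition $a=s_0<s_1<\cdots<s_M=b$ of $I=[a,b]$ with mesh smaller than $\lambda$, each compact subinterval $[s_{j-1},s_j]$ is then contained in some $J_{i(j)}$. Since $f|_{J_{i(j)}}$ is locally absolutely continuous and $[s_{j-1},s_j]$ is a compact subinterval of $J_{i(j)}$, the restriction $f|_{[s_{j-1},s_j]}$ is absolutely continuous in the sense of Definition~\ref{def:acR}. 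This reduces the problem to gluing finitely many absolutely continuous pieces together.

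For the gluing, given $\eps>0$, absolute continuity of $f$ on each $[s_{j-1},s_j]$ provides $\delta_j>0$ that controls $\sum_k |f(b_k)-f(a_k)|<\eps/M$ over disjoint intervals inside $[s_{j-1},s_j]$ of total length below $\delta_j$; I would set $\delta:=\min_{1\le j\le M}\delta_j$. Now let $\{(a_k,b_k)\}_k$ be any finite family of disjoint intervals in $I$ with $\sum_k |b_k-a_k|<\delta$. The only real difficulty is that some of these intervals may straddle one or more partition points $s_j$. To handle this I would refine the family by cutting every $(a_k,b_k)$ at each partition point lying in its interior: by the triangle inequality this operation can only increase $\sum_k |f(b_k)-f(a_k)|$, while it leaves the total length unchanged and the refined subintervals pairwise disjoint, each now contained in a single closed piece $[s_{j-1},s_j]$. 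Grouping the refined subintervals by the piece containing them and applying the piecewise estimate (the total length within each piece is still below $\delta\le\delta_j$) yields $\sum_k |f(b_k)-f(a_k)|<M\cdot\eps/M=\eps$, which is exactly absolute continuity of $f$ on $I$.

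The genuinely substantive step is the last one. The straddling intervals are the sole obstacle, and the triangle-inequality refinement is the device that removes it: splitting an interval at interior partition points never decreases the relevant sum, so an estimate for the split configuration already bounds the original. Everything else — the finite subcover, the Lebesgue number, and passing from local to honest absolute continuity on compact subintervals — is routine.
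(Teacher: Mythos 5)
Your proof is correct, and it takes a genuinely different route from the paper's. The paper, after the same initial reduction to a finite subcover (and additionally to $n=1$ by arguing componentwise), proceeds \emph{algebraically}: it takes a smooth partition of unity $\{\chi_i\}$ subordinate to the finite cover, observes that each $\chi_i$ is Lipschitz so that each product $\chi_i f$ is absolutely continuous on $I$, and concludes since $f=\sum_i \chi_i f$ is a finite sum of AC functions. You instead argue \emph{directly from the $\eps$--$\delta$ definition}: Lebesgue number, a partition of $I$ with mesh below it, absolute continuity on each compact piece (which is exactly where ``locally AC'' enters), and the splitting-at-partition-points device to handle straddling intervals, which is indeed the one substantive step and which you justify correctly by the triangle inequality (splitting preserves disjointness and total length and can only increase the sum). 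What the paper's approach buys is brevity, at the cost of invoking the closure of AC functions under multiplication by Lipschitz functions and under sums (the product step itself hides a small argument about supports and boundedness); what your approach buys is self-containedness --- no auxiliary facts about the AC class, no reduction to $n=1$ since the triangle inequality works in $\R^n$ --- and it makes explicit where compactness of $I$ is used, including your correct observation (via $f(x)=x^2$ on $\R$) that the statement genuinely fails for unbounded closed intervals, a point the paper leaves implicit in its ``without loss of generality'' reduction to a finite cover.
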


\begin{proof}
 Without loss of generality we may assume that $n=1$, i.e.\ $f\colon I \to \R$, and that $\mathcal{J}$ consists of finitely many open intervals $J_i$ of $I$ ($i=1,\ldots,N$). There exists a smooth partition of unity, $\{ \chi_i \}_{i=1}^N$, subordinate to $\mathcal{J}$. For each $i$, in particular, $\chi_i$ is Lipschitz continuous and hence the product $\chi_i f$ absolutely continuous on $I$. Therefore, $\gamma = \sum_{i=1}^N \chi_i f$ is absolutely continuous as well.
\end{proof}

\begin{corollary}
 Let $f\colon I \to \R^n$. Then $f$ is absolutely continuous if and only if $\left. f \right|_J$ is locally absolutely continuous for any open subset $J \subseteq I$. \qed
\end{corollary}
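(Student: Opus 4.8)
The plan is to prove the two implications separately, disposing of the forward direction by unwinding the definition and reducing the reverse direction to Lemma~\ref{lem1}. For the forward implication I would assume $f \in AC(I,\R^n)$, fix an arbitrary open subset $J \subseteq I$, and fix a closed subinterval $[a,b] \subseteq J$. The key observation is that the $\delta>0$ furnished by the absolute continuity of $f$ on $I$ for a prescribed $\eps>0$ works verbatim once the competing disjoint intervals are constrained to lie in $[a,b]$: since $[a,b] \subseteq I$, any finite family of disjoint intervals inside $[a,b]$ of total length less than $\delta$ is in particular such a family inside $I$, so the defining inequality $\sum|f(b_i)-f(a_i)|<\eps$ holds. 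Hence $f$ is AC on every closed subinterval of $J$, i.e.\ $f|_J$ is locally absolutely continuous. This direction needs no covering or compactness argument.

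For the reverse implication I would specialize the hypothesis to a convenient open cover of $I$ and invoke Lemma~\ref{lem1}. Since $I$ is a closed, hence compact, interval, I would pick a finite cover $\mathcal{J}=\{J_i\}_{i=1}^N$ of $I$ by intervals that are open in the subspace topology of $I$, arranged so that the two endpoints of $I$ are covered by half-open pieces such as $[a,c)$ and $(c',b]$. By hypothesis each restriction $f|_{J_i}$ is locally absolutely continuous, so Lemma~\ref{lem1} applies directly and yields that $f$ is absolutely continuous on $I$. Alternatively, and even more directly, one may simply take $J=I$ itself (which is open in $I$): the hypothesis then asserts that $f$ is locally AC on $I$, and because $I$ is a closed subinterval of itself this is precisely absolute continuity on $I$, in accordance with the identity $AC(I,\R^n)=AC_\loc(I,\R^n)$ recorded after Definition~\ref{def:acR}.

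The one point I would treat carefully --- and the only genuine subtlety --- is the meaning of ``open subset $J \subseteq I$''. These must be read as open in the subspace topology of $I$ rather than as open subsets of $\R$. If one insisted on the latter reading, no open set could contain an endpoint of $I$, and the corollary would simply be false: the function $x\mapsto x\sin(1/x)$ on $(0,1]$, extended by $0$ at the origin, is smooth and hence locally AC on every $\R$-open subset of $[0,1]$ (each such subset lies in $(0,1)$), yet it has unbounded variation near $0$ and is therefore not AC on $[0,1]$. With the subspace interpretation in force the behaviour at the endpoints is controlled, and both implications proceed exactly as sketched above.
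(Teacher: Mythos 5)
Your proof is correct and is essentially the paper's intended argument: the forward implication is immediate from Definition~\ref{def:acR}, and the reverse implication is exactly an application of Lemma~\ref{lem1} (or, most directly, of the identity $AC(I,\R^n)=AC_\loc(I,\R^n)$ for closed $I$, applied with $J=I$), which is why the corollary carries no separate proof in the paper. Your remark that ``open subset $J\subseteq I$'' must be read in the subspace topology of $I$, supported by the counterexample $x\sin(1/x)$ under the $\R$-open reading, is a correct and worthwhile clarification rather than a deviation from the paper's route.
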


 As in the case of real-valued AC functions, the standard arc-length $L(\gamma)$ of absolutely continuous curves $\gamma$ as defined in \eqref{LC1} is well-defined on manifolds with smooth (or even continuous) Riemannian metrics.

\begin{proposition}\label{prop1}
 Let $M$ be a connected manifold equipped with a continuous\footnote{For the first part of the paper, it is sufficient to consider smooth Riemannian metrics $g$. However, some of the arguments also hold in the more general case of Riemannian metrics that depend only continuously on the points of the manifold and which are considered in detail in Section~\ref{section3}.} Riemannian metric $g$. For any absolutely continuous path $\gamma\colon [0,1] \to M$ the derivative $\gamma'$ exists a.e.\ and $\| \gamma' \|_g \in L^1(I)$. In particular,
\[
 L(\gamma) = \int_0^1 \| \gamma'(t) \|_g \, dt
\]
 is a well-defined length of $\gamma \in \A_\ac$.
\end{proposition}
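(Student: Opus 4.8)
The plan is to reduce the statement to the classical differentiation theory of real-valued absolutely continuous functions by working in finitely many charts, using continuity of $g$ only to bound the metric on compact sets. First I would exploit compactness of $[0,1]$: since $\gamma$ is continuous, $\gamma([0,1])$ is compact and can be covered by finitely many chart domains $U_1,\dots,U_N$. Applying the Lebesgue number lemma to the open cover $\{\gamma^{-1}(U_i)\}_i$ of $[0,1]$ yields a partition $0 = t_0 < t_1 < \dots < t_m = 1$ such that each $\gamma([t_{j-1},t_j])$ is contained in a single chart domain $U_{i(j)}$. This localizes everything to the individual subintervals, and the finitely many partition points form a null set that may be ignored.

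On a fixed subinterval $[t_{j-1},t_j]$ the composition $u_{i(j)}\circ\gamma$ is locally absolutely continuous by Definition~\ref{def:ac}, and since $[t_{j-1},t_j]$ is a closed interval this means it is absolutely continuous there (recall $AC = AC_\loc$ on closed intervals, as noted after Definition~\ref{def:acR}). The classical theory of absolutely continuous $\R^n$-valued functions then gives that $(u_{i(j)}\circ\gamma)'$ exists almost everywhere and lies in $L^1([t_{j-1},t_j],\R^n)$. Pulling this back through the chart shows that $\gamma'(t)$ exists as a tangent vector for almost every $t$ in the subinterval. Collecting the finitely many exceptional null sets, one concludes that $\gamma'$ exists a.e.\ on $[0,1]$.

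It then remains to prove $\|\gamma'\|_g \in L^1(I)$. Here I would use continuity of $g$ decisively: on the compact set $\gamma([t_{j-1},t_j]) \subseteq U_{i(j)}$ the coordinate components of $g$ are continuous, hence bounded, so there is a constant $C_j$ with $\|\gamma'(t)\|_{g} \le C_j\,\bigl|(u_{i(j)}\circ\gamma)'(t)\bigr|$ for a.e.\ $t \in [t_{j-1},t_j]$, where $|\cdot|$ is the Euclidean norm. Since $(u_{i(j)}\circ\gamma)' \in L^1$, the right-hand side is integrable, whence $\|\gamma'\|_g \in L^1([t_{j-1},t_j])$; measurability of $t\mapsto\|\gamma'(t)\|_g$ follows from measurability of $\gamma'$ together with continuity of $g$. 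Summing the finitely many contributions gives $\|\gamma'\|_g \in L^1(I)$, so that the integral in \eqref{LC1} is finite and the length $L(\gamma)$ is well-defined.

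The only genuinely delicate point is the passage between charts: one must verify that the a.e.-existence of $\gamma'$ and the value of $\|\gamma'(t)\|_g$ do not depend on the chart used, and that the overlap and partition points contribute nothing. Both follow from smoothness of the transition maps together with the chain rule (which transports differentiability points and derivatives between overlapping charts and shows that $\|\gamma'\|_g$ is an intrinsic quantity) and from the fact that finite unions of null sets are null. In particular, no curvature bound or regularity of $g$ beyond continuity enters the argument, so the proposition holds for continuous Riemannian metrics just as in the smooth case.
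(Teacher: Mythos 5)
Your proposal is correct and takes essentially the same route as the paper's own proof: localize to charts, invoke the classical differentiation theory of real-valued absolutely continuous functions to get a.e.\ existence and integrability of the coordinate derivatives, and use continuity (hence local boundedness) of the components $g_{ij}$ to conclude $\| \gamma' \|_g \in L^1$. The paper's proof is simply a terser version of this argument, with the finite-cover, measurability and chart-independence bookkeeping left implicit.
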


 In fact, Proposition~\ref{prop1} also holds for bounded Riemannian metrics, but we will not consider such metrics in this paper.

\begin{proof}
 Let $(u,U)$ be a chart on $M$, $u=(x^1,\ldots,x^n)$. By Definition~\ref{def:ac}, each $x^i \circ \gamma\colon \R \supseteq \gamma^{-1}(\gamma(I) \cap U) \to \R$ is locally absolutely continuous. Therefore, all $(x^i \circ \gamma)'$ exist a.e.\ and are locally integrable. Thus
\[
 \| \gamma'\|_g = \sqrt{g(\gamma',\gamma')} = \Big| \sum_{i,j} g_{ij} \, \frac{d(x^i \circ \gamma)}{dt} \, \frac{d(x^j \circ \gamma)}{dt} \Big|^{1/2}
\]
 is well-defined and integrable.
\end{proof}

\begin{corollary}\label{cor2}
 Let $M$ be a connected manifold equipped with a smooth Riemannian metric $g$. If $\gamma \in \A_\ac$, then
\bel{der1}
 \lim_{\delta \to 0} \frac{d(\gamma(t),\gamma(t+\delta))}{\vert \delta \vert} = \| \gamma'(t) \|_g.
\ee
\end{corollary}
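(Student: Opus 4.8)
The plan is to observe that the corollary is really a pointwise statement and that the computation already carried out in the proof of Theorem~\ref{thm1}---culminating in \eqref{gammaprime}---never used piecewise smoothness of the curve, only its continuity together with differentiability \emph{at the single point} $t$. Since the left-hand side of \eqref{der1} only makes sense where $\gamma'(t)$ exists, I read the statement as holding at every such $t$, hence for almost every $t\in(0,1)$: by Proposition~\ref{prop1} an absolutely continuous $\gamma$ is differentiable almost everywhere, and, being locally absolutely continuous in charts, it is in particular continuous. I would therefore fix any $t\in(0,1)$ at which $\gamma'(t)$ exists and simply re-run the earlier computation.

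Concretely, I would first choose a normal neighborhood $U$ of $\gamma(t)$ on which $\exp_{\gamma(t)}$ is a diffeomorphism and pick $\delta_0>0$ with $\gamma([t-\delta_0,t+\delta_0])\subseteq U$; this is possible precisely because $\gamma$ is continuous. The one ingredient that deserves explicit mention is the identity
\[
 d(\gamma(t),\gamma(t+\delta)) = \bigl\| \exp_{\gamma(t)}^{-1}(\gamma(t+\delta)) \bigr\|_{g_{\gamma(t)}}, \qquad |\delta|<\delta_0,
\]
which is the standard fact (Gauss lemma) that inside a geodesic ball the radial geodesics are minimizing, so that the Riemannian distance from the centre equals the $g_{\gamma(t)}$-length of the initial velocity of the connecting geodesic. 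For smooth $g$ this holds verbatim and is exactly the first equality used in the proof of Theorem~\ref{thm1}.

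With this in hand the map $\delta \mapsto \exp_{\gamma(t)}^{-1}(\gamma(t+\delta))$ is the composition of the smooth map $\exp_{\gamma(t)}^{-1}$ with the curve $\gamma$, which is differentiable at $t$; by the chain rule its derivative at $\delta=0$ equals $(T_0\exp_{\gamma(t)})^{-1}(\gamma'(t)) = \gamma'(t)$, since $T_0\exp_{\gamma(t)} = \id$. Dividing the displayed identity by $\delta$ and letting $\delta\to 0+$ then yields, exactly as in \eqref{gammaprime},
\[
 \lim_{\delta\to 0+}\frac{d(\gamma(t),\gamma(t+\delta))}{\delta} = \|\gamma'(t)\|_{g_{\gamma(t)}}.
\]
Running the same argument for $\delta<0$ (i.e.\ at $t-\delta$) gives the corresponding left-hand limit, and since both one-sided limits agree with $\|\gamma'(t)\|_g$, the two-sided limit with $|\delta|$ in the denominator---that is, \eqref{der1}---follows.

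I do not expect a genuine obstacle here: the whole content is the realization that \eqref{gammaprime} is a pointwise statement requiring only differentiability at $t$, a property that $\A_\ac$ supplies almost everywhere. The only point I would state rather than reprove, since it is classical for smooth metrics, is the Gauss-lemma identity for the distance inside a normal neighborhood; everything else is the chain rule together with $T_0\exp_{\gamma(t)}=\id$.
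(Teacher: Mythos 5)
Your proposal is correct and takes essentially the same route as the paper: the paper's proof of Corollary~\ref{cor2} consists precisely of re-invoking the exponential-map computation culminating in \eqref{gammaprime} from the proof of Theorem~\ref{thm1}, the point being that this computation needs only continuity of $\gamma$ plus differentiability at the single parameter value $t$, which absolute continuity supplies almost everywhere. Your explicit appeal to the Gauss lemma for the identity $d(\gamma(t),\gamma(t+\delta)) = \| \exp_{\gamma(t)}^{-1}(\gamma(t+\delta)) \|_{g_{\gamma(t)}}$ merely spells out the first equality that the paper uses without comment.
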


\begin{proof}
 This follows from \eqref{gammaprime} by using the exponential map $\exp_{\gamma(t)}$ in the beginning of the second part of the proof for Theorem~\ref{thm1}.
\end{proof}

 The equivalence of the ``metric'' derivative on the left hand side of \eqref{der1} and the analytic derivative on the right hand side will be a crucial step when considering Riemannian metrics of low regularity where the exponential map is not available.

%-----------------------------------------------------------------------------------------------

\subsection{Piecewise smooth vs.\ absolutely continuous curves}

 We prove that $L_d(\gamma) = L(\gamma)$ also holds for curves $\gamma \in \A_\ac$. The main steps of our approach are to firstly introduce a new metric on the space $\A_\ac$ and secondly show that $\A_\infty$ is dense in $\A_\ac$ with respect to this "variational topology" on $\A_\ac$. The distance between paths used in our approach below is similar to the distance used in \cite{Mil:mt}, which includes an extra energy term. The denseness of $\A_\infty$ in $\A_\ac$ in turn implies that the intrinsic metric $d_\ac = d(M,\A_\ac,L)$ associated to the class of absolutely continuous curves,
\bel{dac}
 d_\ac(p,q) := \inf \{ L(\gamma) \, | \, \gamma \in \A_\ac, \gamma(0) = p, \gamma(1) = q \}, \qquad p,q \in M,
\ee
 is identical to the standard intrinsic metric $d$ as defined in \eqref{d}. As a result we obtain an extension of Theorem~\ref{thm1} for absolutely continuous curves.

\begin{definition}\label{def:Dac}
 Let $M$ be a connected manifold with continuous Riemannian metric $g$ and induced metric $d$~\eqref{d}. The \emph{variational metric\/} on the class of absolutely continuous paths is defined by
\bel{Dac}
 D_\ac(\gamma,\sigma) := \sup_{t \in I} \, d(\gamma(t),\sigma(t)) + \int_I \big| \|\gamma'(t)\|_g - \|\sigma'(t)\|_g \big| \, dt,
\ee
 for $\gamma, \sigma\colon I \to M$ absolutely continuous paths.
\end{definition}

 Since $(M,d)$ is a metric space\footnote{For continuous Riemannian metrics, this is shown in Proposition~\ref{prop3} below.}, so is $(\A_\ac(M),D_\ac)$.  We call the topology on $\A_\ac$ induced by $D_\ac$ the \emph{variational topology\/} of $\A_\ac$.

 Whenever $(M,d)$ is complete, then $(\A_\ac(M),D_\ac)$ is a complete metric space, too. A proof for this is given later in Section~\ref{ss:ac}.

\begin{lemma}\label{lem2}
 Let $M$ be a connected manifold with continuous Riemannian metric $g$. The length functional $L\colon \A_\ac \to \R$ is Lipschitz continuous with respect to $D_\ac$.
\end{lemma}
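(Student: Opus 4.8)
The plan is to estimate $|L(\gamma) - L(\sigma)|$ directly from the definition of the length functional, avoiding any geometric machinery. First I would observe that, by Proposition~\ref{prop1}, both speed functions $\|\gamma'\|_g$ and $\|\sigma'\|_g$ belong to $L^1(I)$, so that $L(\gamma)$ and $L(\sigma)$ are finite and their difference is well-defined. Unwinding the definition~\eqref{LC1} and using linearity of the integral,
\[
 |L(\gamma) - L(\sigma)| = \left| \int_I \big( \|\gamma'(t)\|_g - \|\sigma'(t)\|_g \big) \, dt \right|,
\]
so that, by the triangle inequality for integrals,
\[
 |L(\gamma) - L(\sigma)| \leq \int_I \big| \|\gamma'(t)\|_g - \|\sigma'(t)\|_g \big| \, dt.
\]

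The key observation is that the right-hand side is exactly the second summand in the definition~\eqref{Dac} of the variational metric $D_\ac$. Since the first summand $\sup_{t \in I} d(\gamma(t),\sigma(t))$ is nonnegative, I would conclude
\[
 |L(\gamma) - L(\sigma)| \leq D_\ac(\gamma,\sigma),
\]
which is precisely Lipschitz continuity with constant $1$; in fact $L$ is nonexpansive, and the supremum term of $D_\ac$ plays no role in this particular estimate.

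I do not expect any serious obstacle: the statement reduces to the triangle inequality once the definitions are written out. The only points needing a word of care are the finiteness of both lengths, supplied by Proposition~\ref{prop1}, and the observation that the integrand $\big| \|\gamma'\|_g - \|\sigma'\|_g \big|$ compares the two \emph{scalar} speeds pointwise, even though these are measured by $g$ at the distinct base points $\gamma(t)$ and $\sigma(t)$, so that no parallel transport or identification of tangent spaces is involved. What matters more is the intended use: combined with the forthcoming density of $\A_\infty$ in $\A_\ac$ in the variational topology, this Lipschitz continuity of $L$ will let one propagate the identity $L = L_d$ from piecewise smooth curves to all absolutely continuous curves by a straightforward continuity argument.
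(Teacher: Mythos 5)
Your proof is correct and coincides with the paper's own argument: both write $|L(\gamma)-L(\sigma)|$ as the integral of the difference of speeds, apply the triangle inequality for integrals, and bound the result by $D_\ac(\gamma,\sigma)$ since the supremum term is nonnegative. Your added remarks on finiteness via Proposition~\ref{prop1} and on the pointwise comparison of scalar speeds are accurate but not essential to the estimate.
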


\begin{proof}
 For $\gamma,\sigma \in \A_\ac$,
\[
 \left| L(\gamma) - L(\sigma) \right| = \left| \int_0^1 \| \gamma'\|_g - \|\sigma'\|_g \right| \leq \int_0^1 \big| \|\gamma'\|_g - \|\sigma'\|_g \big| \leq D_\ac(\gamma,\sigma). \qedhere
\]
\end{proof}

\begin{theorem}\label{thm2}
 Let $M$ be a connected manifold with continuous Riemannian metric $g$. Then the class $\A_\infty$ of piecewise smooth curves is dense in the class $\A_\ac$ of absolutely continuous curves with respect to the variational topology defined in \ref{def:Dac}.
\end{theorem}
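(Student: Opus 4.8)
The plan is to reduce everything to a local mollification argument in charts and then reassemble the pieces. Let $\gamma\colon[0,1]\to M$ be absolutely continuous. Since $\gamma([0,1])$ is compact it is covered by finitely many chart domains, and by a Lebesgue-number argument one can choose a partition $0=t_0<t_1<\cdots<t_N=1$ so that each restriction $\gamma|_{[t_{k-1},t_k]}$ takes values in a single chart $(u_k,U_k)$ whose image we may assume to have compact closure. It therefore suffices to approximate each $\gamma|_{[t_{k-1},t_k]}$ by a smooth curve lying in $U_k$ while matching the endpoint values $\gamma(t_{k-1})$ and $\gamma(t_k)$; concatenating these matched pieces yields a continuous, piecewise smooth curve $\sigma\in\A_\infty$. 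Since the integral term in~\eqref{Dac} is additive over the partition and the supremum term is the maximum over the pieces, it is enough to control each piece separately.

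Fix one such piece and write $f:=u_k\circ\gamma\colon[t_{k-1},t_k]\to\R^n$, which is absolutely continuous by Definition~\ref{def:ac} (hence lies in $W^{1,1}$). I would extend $f$ to a slightly larger interval and mollify: the convolution $f_\eps=f*\rho_\eps$ is smooth and satisfies $f_\eps\to f$ uniformly and $f_\eps'\to f'$ in $L^1$. To restore the endpoint values, subtract the unique affine correction $\ell_\eps$ with $\ell_\eps(t_{k-1})=f_\eps(t_{k-1})-f(t_{k-1})$ and $\ell_\eps(t_k)=f_\eps(t_k)-f(t_k)$; since both endpoint errors tend to $0$, we have $\ell_\eps\to0$ in $C^1$, so the corrected functions $\tilde f_\eps:=f_\eps-\ell_\eps$ still converge to $f$ in $W^{1,1}$ and now agree with $f$ at $t_{k-1}$ and $t_k$. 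For $\eps$ small the image $\tilde f_\eps([t_{k-1},t_k])$ stays inside $u_k(U_k)$, so $\sigma_k:=u_k^{-1}\circ\tilde f_\eps$ is a smooth curve in $U_k$ through the prescribed endpoints.

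It remains to verify that $D_\ac(\gamma,\sigma)\to0$, i.e.\ that both terms in~\eqref{Dac} are small. For the supremum term, $\tilde f_\eps\to f$ uniformly in the chart, and since $d$ is locally equivalent to the chart-Euclidean distance on compact sets (so that $d$ induces the manifold topology for continuous $g$, as established in Section~\ref{section3}), this gives $\sup_t d(\gamma(t),\sigma(t))\to0$. For the derivative term the basepoint dependence of $g$ has to be untangled, and I would split
\begin{equation*}
 \big|\,\|\sigma'\|_{g_{\sigma}}-\|\gamma'\|_{g_{\gamma}}\,\big|
 \le \big|\,\|\sigma'\|_{g_{\sigma}}-\|\gamma'\|_{g_{\sigma}}\,\big|
 + \big|\,\|\gamma'\|_{g_{\sigma}}-\|\gamma'\|_{g_{\gamma}}\,\big|.
\end{equation*}
The first summand is bounded by $\|\sigma'-\gamma'\|_{g_\sigma}\le C\,|\sigma'-\gamma'|$, with $C$ controlling $g$ on the compact chart closure, so it tends to $0$ in $L^1$ because $\tilde f_\eps'\to f'$ in $L^1$. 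Using $\big|\sqrt{Q_1(v)}-\sqrt{Q_0(v)}\big|=|Q_1(v)-Q_0(v)|/(\sqrt{Q_1(v)}+\sqrt{Q_0(v)})$ together with a uniform positive-definiteness bound for $g$ on the compact chart closure, the second summand is bounded pointwise by $C'\,\omega(\|\sigma-\gamma\|_\infty)\,|\gamma'(t)|$, where $\omega$ is a modulus of continuity of the coefficients $g_{ij}$; since $|\gamma'|\in L^1$ by Proposition~\ref{prop1} and $\omega(\|\sigma-\gamma\|_\infty)\to0$, this integrates to $0$ as well.

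The routine part is the mollification and the affine endpoint correction; the genuine obstacle is the derivative term, where the point at which the Riemannian norm is measured moves with the approximating curve. This is exactly where continuity of $g$ (in fact uniform continuity and a uniform positive-definiteness bound on compact chart domains) enters, and where one cannot simply appeal to the smooth-metric computations of Section~\ref{section1}; handling it is what makes the statement valid for merely continuous Riemannian metrics.
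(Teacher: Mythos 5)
Your proof is correct, and its skeleton (finite chart cover, partition via a Lebesgue number, mollification of $u\circ\gamma$ in each chart, then estimating the two terms of $D_\ac$ separately) is the same as the paper's; but you deviate in two substantive ways. First, the endpoint matching: the paper assumes each chart image is convex and inserts short straight-line connector curves $\nu_{j-1},\mu_j$ on subintervals $[t_{j-1},t_{j-1}+\delta]$ and $[t_j-\delta,t_j]$, whose lengths and distances to $\gamma$ must then be estimated separately (this is where its $3\eta$, $7N\eta$, $10N\eta$ bookkeeping comes from); you instead subtract an affine correction $\ell_\eps\to0$ in $C^1$ from the mollification, so each approximating piece hits $\gamma(t_{k-1})$ and $\gamma(t_k)$ exactly. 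Your device is cleaner: it removes the need for convex chart images, the connector curves, and the $\delta$-subinterval case analysis. Second, and more importantly, your splitting of the derivative term into an $L^1$-convergence part (norm taken at a fixed basepoint) and a modulus-of-continuity part bounded by $C'\,\omega(\|\sigma-\gamma\|_\infty)\,|\gamma'(t)|$ supplies an argument the paper only gestures at: the paper passes from \eqref{unifconv}--\eqref{L1locconv} to \eqref{gintconv} by declaring ``without loss of generality'' that $\|\cdot\|_g$ equals the Euclidean norm, but norm equivalence with constants does not transfer smallness of differences, and when $g$ genuinely depends on the basepoint one needs precisely your two-term decomposition (the first term controlled by $L^1$ convergence of the chart derivatives together with a uniform bound on $g$, the second by uniform continuity of the $g_{ij}$ on a compact set and $|\gamma'|\in L^1$). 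So your writeup is not only valid but fills in the one step the paper treats too casually; the only detail worth adding is the inequality $d(p,q)\le \mu_0\,|u(p)-u(q)|$ for nearby points in a chart (join $u(p)$ to $u(q)$ by a segment and use \eqref{l0m0}), which is the direction of the local equivalence you invoke to convert uniform chart convergence into smallness of the supremum term.
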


 The idea of the proof is straightforward, but the proof itself is lengthy and technical. On a finite number of chart neighborhoods one approximates the absolutely continuous curve by piecewise smooth curves generated by convolution with mollifiers. Since the end points then do not coincide with the end points of the initial curve, they have to be joined up in a suitable way (namely by sufficiently short curves).

\begin{proof}
 Let $\gamma:[0,1] \rightarrow M$ be a curve in $\A_\ac$. We may cover the image of the curve $\gamma(I)$ by finitely many charts $(u_i,U_i)$ and assume without loss of generality that each $u_i(U_i)$ is convex in $\R^n$ and $U_i \subset\subset M$. Since the set $\overline{\bigcup_i U_i}$ is compact in $M$, the Riemannian norm $\|.\|_g$ can be estimated by a multiple of the Euclidean norm $|.|$ (see proof of Proposition~\ref{prop3}). Without loss of generality we consider them equal in all computations. Furthermore, we pick a partition $0=t_0<t_1< ... < t_N=1$ of $[0,1]$ such that the image of $\left. \gamma \right|_{[t_{j-1},t_j]}$ is contained in one chart $(u_i,U_i)$. We consider a fixed interval $[t_{j-1},t_j]$ and omit the index $i$ from now on.

 Let $\eta>0$. Since $\gamma$ is absolutely continuous, $\| \gamma' \|_g$ is in $L^1_\loc$ by Proposition~\ref{prop1}. By the fundamental theorem of calculus for absolutely continuous functions, and continuity of $\gamma$, there exists $\delta \in (0,\frac{1}{2} \, |t_{j}-t_{j-1}|)$ such that the following inequalities hold:
\begin{subequations}
 \begin{align}
  \sup_{\substack{s,t \in [t_{j-1},t_j]\\|s-t|<2\delta}} d(\gamma(s),\gamma(t)) &< \eta, \label{proof2.b} \\
  \sup_{\substack{s,t \in [t_{j-1},t_j]\\|s-t|<2\delta}} \int_{s}^{t} \|\gamma'\|_g &< \eta, \label{proof2.d} \\ 
  \sup_{\substack{s,t \in [t_{j-1},t_j]\\|s-t|<2\delta}} | u( \gamma(s) ) - u(\gamma(t)) | &< \eta . \label{proof2.a}
 \end{align}
\end{subequations} 

 By convolution with a mollifier $\rho$ we obtain a componentwise regularization of $u \circ \gamma|_{[t_{j-1},t_j]}$. Thus for sufficiently small $\eps>0$ the smooth approximation $\gamma_\eps := u^{-1} ((u \circ \gamma) \ast \rho_\eps) \in \A_\infty$ on $[t_{j-1},t_j]$ satisfies
\begin{subequations}
 \begin{align}
  \sup_{t \in [t_{j-1},t_j]} |u (\gamma(t)) - u (\gamma_\eps(t))| &< \eta, \label{unifconv} \\
  \left\| (u \circ \gamma_\eps)' - (u \circ \gamma)' \right\|_{L^1([t_{j-1},t_j])} &< \eta,  \label{L1locconv}
 \end{align}
\end{subequations}
 and thus on $M$
 \begin{subequations}
 \begin{align}
  \sup_{t \in [t_{j-1},t_j]} d(\gamma(t),\gamma_\eps(t)) &< \eta \label{dconv}, \\
  \int_{t_{j-1}}^{t_j} \left| \| \gamma'\|_g - \| \gamma'_\eps \|_g \right| &< \eta . \label{gintconv}
 \end{align}
\end{subequations}

 Since $u(U)$ is a convex subset of $\R^n$ we can join the points $u(\gamma(t_{j-1}))$ and $u(\gamma_\eps(t_{j-1}+\delta))$ by a straight line $\hat\nu_{j-1}$ in $u(U)$:
\begin{align*}
 &\hat\nu_{j-1}\colon [t_{j-1},t_{j-1}+\delta] \to u(U) \subseteq \R^n, \\
&\hat\nu_{j-1} (t) = u(\gamma(t_{j-1})) + \frac{t-t_{j-1}}{\delta} \left(u(\gamma_\eps(t_{j-1}+\delta)) - u(\gamma(t_{j-1}))\right).
\end{align*}
 Similarly we obtain a straight line $\hat\mu_j$ that connects $u(\gamma_\eps(t_{j}-\delta))$ to $u(\gamma(t_j))$. These straight lines are mapped to (smooth) curves $\nu_{j-1}$ and $\mu_j$ in $M$ by pulling back $\hat\nu_{j-1}$ and $\hat\mu_j$ with $u^{-1}$, respectively.

 Let us compute the lengths of $\nu_{j-1}$ and $\mu_j$. We estimate
\begin{align*}
 \| \nu_{j-1}'(t) \|_g
 &= | \hat\nu_{j-1}'(t) | = \left| \frac{1}{\delta} \left( u(\gamma_\eps(t_{j-1}+\delta)) - u(\gamma(t_{j-1})) \right) \right| \\
  &\leq \frac{1}{\delta} ( \underbrace{| u(\gamma_\eps(t_{j-1}+\delta)) -  u(\gamma(t_{j-1}+\delta))|}_{\substack{< \,\eta \\ \text{for}~\eps~\text{sufficiently small by}~\eqref{unifconv}}} + \underbrace{| u(\gamma(t_{j-1}+\delta)) - u(\gamma(t_{j-1})) |}_{\substack{< \,\eta\\ \text{for}~\delta~\text{sufficiently small by}~\eqref{proof2.a}}} ) < 2 \frac{\eta}{\delta}.
\end{align*}
 Therefore,
\begin{align} \label{lengthnuj-1}
 L(\nu_{j-1}) = \int_{t_{j-1}}^{t_{j-1}+\delta} \| \nu_{j-1}'(t) \|_g \, dt \leq \delta 2 \frac{\eta}{\delta} = 2 \eta,
\end{align}
 and, in a similar fashion, $L(\mu_j) \leq 2 \eta$.

 Each point on $\nu_{j-1}(t)$ and $\mu_j(t)$ is less than $3\eta$ away from the corresponding point $\gamma(t)$ on $\gamma$. For example,  by \eqref{proof2.b} and \eqref{lengthnuj-1} we obtain for $t \in [t_{j-1},t_{j-1}+\delta]$,
\begin{align}\label{gnu}
 d(\gamma(t),\nu_{j-1}(t)) &\leq d(\gamma(t),\gamma(t_{j-1})) + d(\underbrace{\gamma(t_{j-1})}_{\nu_{j-1}(t_{j-1})},\nu_{j-1}(t)) < \eta + 2 \eta = 3 \eta.
\end{align}
 Furthermore we can control the length difference of $\gamma$ and $\nu_{j-1}$ by \eqref{proof2.d} and \eqref{lengthnuj-1}
\begin{align}\label{nuL}
\begin{split}
 \int_{t_{j-1}}^{t_{j-1}+\delta} \left| \|\gamma'\|_g-\|\nu_{j-1}'\|_g \right| &\leq \int_{t_{j-1}}^{t_{j-1}+\delta} \|\gamma'\|_g+ \int_{t_{j-1}}^{t_{j-1}+\delta} \|\nu_{j-1}'\|_g < \eta + 2\eta = 3\eta.
\end{split}
\end{align}

 The same procedure is applied to all $N$ subintervals $[t_{j-1},t_j]$ of $[0,1]$. We choose $\delta$ and $\eps$ sufficiently small so that the above estimates hold on all subintervals. Summing up, we have approximated $\gamma$ on all of $[0,1]$ by a new path $\lambda_\eta$, defined by
\begin{align} \label{lambda_eta}
 \lambda_\eta(t) := \begin{cases}
                             \nu_{j-1}(t) & t\in[t_{j-1},t_{j-1}+\delta] \\
			     \gamma_\eps(t) & t\in[t_{j-1}+\delta,t_{j}-\delta] \\
			     \mu_j(t) & t \in [t_j-\delta,t_j].
                            \end{cases}
\end{align}
 Indeed,
\begin{align} \label{D_ac_eta}
 D_\ac(\gamma,\lambda_\eta) &= \underbrace{\sup_{t \in [0,1]} d(\gamma(t),\lambda_\eta(t))}_{\substack{\leq3\eta\\\textrm{by \eqref{dconv} and \eqref{gnu}}}} + \underbrace{\int_0^1 \left| \|\gamma'\|-\|\lambda_\eta'\| \right|}_{\substack{\leq7N\eta\\\textrm{by \eqref{gintconv} and \eqref{nuL}}}} \leq 10 N \eta
\end{align}
 for any $\eta>0$ (where $N$ is the finite number of open subintervals necessary to cover $[0,1]$ and hence $\gamma([0,1])$ by convex neighborhoods).
\end{proof}

\begin{remark}
 In the case of smooth Riemannian manifolds, the proof of Theorem~\ref{thm2} can be simplified. There, one may cover $\gamma([0,1])$ by a finite number of geodesically convex chart neighborhoods.
 The approximative curves $\gamma_\eps$ are constructed just as in the proof of Theorem~\ref{thm2}, but for the joining curves $\nu_{j-1}$ and $\mu_j$ one may simply use radial geodesics that minimize the distance between two points in a chart neighborhood. Thus any absolutely continuous path can be approximated by a sequence of piecewise smooth paths.
\end{remark}

\begin{corollary}\label{cor1}
 Let $M$ be a connected manifold with continuous Riemannian metric $g$ and with induced distance functions $d$ and $d_\ac$ as defined in \eqref{d} and \eqref{dac}, respectively. Then $d = d_\ac$.
\end{corollary}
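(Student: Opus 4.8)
The plan is to read off the equality from the two preparatory facts established immediately above: the density of $\A_\infty$ in $\A_\ac$ for the variational topology (Theorem~\ref{thm2}) and the $D_\ac$-Lipschitz continuity of the length functional (Lemma~\ref{lem2}).

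First I would dispatch the easy inclusion. Every piecewise smooth curve is absolutely continuous, so $\A_\infty \subseteq \A_\ac$, and passing to the infimum in \eqref{dac} over the larger class can only make it smaller; comparing with \eqref{d} gives $d_\ac(p,q) \leq d(p,q)$ for all $p,q \in M$.

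For the reverse inequality I would fix $p,q \in M$ and an arbitrary $\gamma \in \A_\ac$ with $\gamma(0)=p$, $\gamma(1)=q$, and aim to manufacture piecewise smooth competitors from $p$ to $q$ whose lengths tend to $L(\gamma)$. Theorem~\ref{thm2} supplies curves $\lambda_\eta \in \A_\infty$ with $D_\ac(\gamma,\lambda_\eta)\to 0$, and Lemma~\ref{lem2} then forces $L(\lambda_\eta) \to L(\gamma)$. Granting for the moment that each $\lambda_\eta$ runs from $p$ to $q$, every $\lambda_\eta$ is an admissible competitor for $d(p,q)$, so $d(p,q) \leq L(\lambda_\eta) \to L(\gamma)$; taking the infimum over $\gamma \in \A_\ac$ yields $d(p,q) \leq d_\ac(p,q)$, and together with the first step this gives $d = d_\ac$.

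The only point requiring care --- and thus the main obstacle --- is the matching of endpoints, because $D_\ac$ controls only $\sup_t d(\gamma(t),\lambda_\eta(t))$ rather than the exact values at $t=0$ and $t=1$. Here I would simply inspect the explicit approximants $\lambda_\eta$ constructed in the proof of Theorem~\ref{thm2}: on the first subinterval the joining arc $\nu_0$ starts at $\nu_0(0)=\gamma(0)=p$, and on the last subinterval $\mu_N$ ends at $\mu_N(1)=\gamma(1)=q$, so the endpoints of $\gamma$ are preserved exactly and no correction is needed. Should one instead wish to argue from denseness alone, the discrepancies $d(p,\lambda_\eta(0))$ and $d(q,\lambda_\eta(1))$ tend to $0$, and since $d$ induces the manifold topology and is locally comparable to the Euclidean distance in a chart (as in Proposition~\ref{prop3}) one can prepend and append piecewise smooth arcs whose lengths are controlled by these discrepancies and hence vanish in the limit, recovering the same conclusion.
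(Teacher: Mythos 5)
Your proposal is correct and follows exactly the paper's argument: the inclusion $\A_\infty \subseteq \A_\ac$ gives $d_\ac \leq d$, and the denseness from Theorem~\ref{thm2} combined with the $D_\ac$-Lipschitz continuity of $L$ from Lemma~\ref{lem2} gives $d \leq d_\ac$. Your additional verification that the approximants $\lambda_\eta$ preserve the endpoints exactly (since $\nu_0(0)=\gamma(0)$ and $\mu_N(1)=\gamma(1)$ by construction) is a detail the paper leaves implicit, and it is checked correctly.
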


\begin{proof}
 Since $\A_\infty \subseteq \A_\ac$, it is clear that $d_\ac \leq d$. On the other hand, $d \leq d_\ac$ follows from the denseness of $\A_\infty$ in $\A_\ac$ with respect to the variational topology.
\end{proof}

 The equality of the induced distance functions $d$ and $d_\ac$ is crucial for answering two questions. We will see that this implies the extension of Theorem~\ref{thm1} to the set $\A_\ac$ (see Section~\ref{ss:lac}). Moreover, we can prove that various notions of absolutely continuous curves on Riemannian manifolds are in fact equal (see Section~\ref{ss:ac} below). 

%-----------------------------------------------------------------------------------------------

\subsection{Length structure with respect to absolutely continuous curves} \label{ss:lac}

\begin{lemma}\label{lem3}
 Let $M$ be a manifold with continuous Riemannian metric $g$ and let $\gamma\colon [0,1] \to M$ be absolutely continuous. Then the function $t \mapsto L_d(\gamma|_{[0,t]})$ is absolutely continuous on $[0,1]$.
\end{lemma}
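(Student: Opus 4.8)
The plan is to work with the induced arc-length function $s\colon [0,1] \to [0,\infty)$, $s(t) := L_d(\gamma|_{[0,t]})$, and to verify absolute continuity for it directly in the sense of Definition~\ref{def:acR}. Two structural facts about $L_d$ will drive everything. First, $L_d$ is additive under concatenation, so that for $0 \le a < b \le 1$ one has $s(b) - s(a) = L_d(\gamma|_{[a,b]})$; in particular $s$ is nondecreasing, whence $|s(b) - s(a)| = s(b) - s(a)$. Second, $s$ takes finite values: since $\gamma \in \A_\ac$, Proposition~\ref{prop1} gives $\|\gamma'\|_g \in L^1(I)$, and the estimate below bounds $s(1) = L_d(\gamma)$ by $L(\gamma) < \infty$.

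The crucial estimate is
\[
L_d(\gamma|_{[a,b]}) \le L(\gamma|_{[a,b]}) = \int_a^b \|\gamma'(r)\|_g \, dr .
\]
To obtain it, I start from the defining supremum \eqref{Ld}: for any partition $a = r_0 < r_1 < \ldots < r_k = b$ each restriction $\gamma|_{[r_{m-1},r_m]}$ is again absolutely continuous, so by Corollary~\ref{cor1} (i.e.\ $d = d_\ac$) together with the infimum defining $d_\ac$ in \eqref{dac} we get $d(\gamma(r_{m-1}),\gamma(r_m)) \le L(\gamma|_{[r_{m-1},r_m]})$. Summing over $m$ and using additivity of $L$ yields $\sum_m d(\gamma(r_{m-1}),\gamma(r_m)) \le \int_a^b \|\gamma'\|_g$, and taking the supremum over all partitions gives the displayed inequality. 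This is exactly the place where the identification $d = d_\ac$ is needed: without Corollary~\ref{cor1} one could only bound $d$ by lengths of \emph{piecewise smooth} connecting curves, which would not suffice to compare $d$ against the length of the absolutely continuous arc itself.

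With the estimate in hand, the conclusion follows from the absolute continuity of the Lebesgue integral of the $L^1$-function $\|\gamma'\|_g$. Given $\eps > 0$, choose $\delta > 0$ so that $\int_E \|\gamma'\|_g < \eps$ for every measurable $E \subseteq [0,1]$ with $|E| < \delta$. Then for any finite family of disjoint intervals $\{(a_i,b_i)\}_{i=1}^m$ with $\sum_i (b_i - a_i) < \delta$, setting $E = \bigcup_i (a_i,b_i)$ gives
\[
\sum_{i=1}^m |s(b_i) - s(a_i)| = \sum_{i=1}^m L_d(\gamma|_{[a_i,b_i]}) \le \sum_{i=1}^m \int_{a_i}^{b_i} \|\gamma'\|_g = \int_E \|\gamma'\|_g < \eps ,
\]
which is precisely the absolute continuity of $s$ on $[0,1]$.

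The only genuinely nontrivial ingredient is the comparison $L_d \le L$ for absolutely continuous arcs, and its proof rests entirely on Corollary~\ref{cor1}; the remaining pieces (additivity and monotonicity of $L_d$, finiteness of $s$, and absolute continuity of the integral) are routine. I would also record at the outset that $s$ is finite-valued and nondecreasing, since these facts are used implicitly when replacing the signed differences $s(b_i) - s(a_i)$ by the $L_d$-lengths $L_d(\gamma|_{[a_i,b_i]})$.
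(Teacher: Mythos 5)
Your proposal is correct and follows essentially the same route as the paper's proof: the key inequality $L_d(\gamma|_{[a,b]}) \le \int_a^b \|\gamma'\|_g$ (resting on $d = d_\ac$ from Corollary~\ref{cor1}), additivity of $L_d$ along the curve, and the absolute continuity of the indefinite integral of the $L^1$-function $\|\gamma'\|_g$. The only difference is presentational: you spell out the partition argument behind the key estimate and the monotonicity/finiteness of $s$, which the paper leaves implicit.
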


\begin{proof}
 Let $\eps>0$ and let $\{ (t_i,t_{i+1})\}_{i=0}^N$ be disjoint intervals of $[0,1]$. Recall that the length $L_d(\gamma)$ of an absolutely continuous curve $\gamma$ is defined as the variation of $\gamma$ with respect to $d = d_\ac$ (see Definition~\eqref{Ld}). Since $L_d(\gamma|_{[t_i,t_{i+1}]}) \leq \int_{t_i}^{t_{i+1}} \| \gamma' \|_g$ it follows that
\[
 \sum_{i=0}^N \left| L_d(\gamma|_{[0,t_{i+1}]}) - L_d(\gamma|_{[0,t_i]}) \right| \leq \sum_{i=0}^N \left| \int_0^{t_{i+1}}\|\gamma'\|_g - \int_0^{t_i} \|\gamma'\|_g \right|.
\]
 The function $F(t) := \int_0^t \| \gamma' \|_g$ is absolutely continuous on $[0,1]$. Thus if the partition $\{(t_i,t_{i+1})\}_{i=0}^N$ satisfies $\sum_{i=0}^N |t_i - t_{i+1}| < \delta$, then $\sum_{i=0}^N \left| F(t_{i+1}) - F(t_i) \right| < \eps$. Consequently, $t \mapsto L_d(\gamma|_{[0,t]})$ is absolutely continuous as well.
\end{proof}

We are now in a position to prove one of the main results in this section, so far only for manifolds with smooth Riemannian metrics.

\begin{theorem}\label{thm3}
 Let $M$ be a connected manifold with smooth Riemannian metric $g$. Then
\[
 L(\gamma) = L_d(\gamma), \qquad  \gamma \in \A_\ac.
\]
\end{theorem}

\begin{proof}
 For $\gamma: [0,1] \to M$ a piecewise smooth curve this is Theorem~\ref{thm1}. For $\gamma \in \A_\ac \setminus \A$ we have to make some minor adjustments to the proof of Theorem~\ref{thm1}: 
 
 ($L_d \leq L$) is always true since $d=d_\ac$ by Corollary~\ref{cor1}.
 
 ($L \leq L_d$) For all $t\in (0,1)$ such that $\gamma'(t)$ exists (hence almost everywhere) and since $g$ is smooth we obtain in the same way that
\[ \frac{d}{dt} L_d (\gamma|_{[0,t]}) = \| \gamma'(t) \|_g. \]
 By Lemma~\ref{lem3}, $L_d$ is absolutely continuous, and therefore we can also apply the fundamental theorem of calculus which yields
\[ L_d(\gamma) = \int_a^b \frac{d}{dt} L_d (\gamma|_{[0,t]}) \, dt = \int_a^b \| \gamma'(t) \|_g \, dt = L(\gamma). \qedhere \]
\end{proof}

%-----------------------------------------------------------------------------------------------

\subsection{Absolutely continuous curves revisited} \label{ss:ac}

 We have seen that the set of absolutely continuous curves $\A_\ac$ induces the same metric structure on a Riemannian manifold $(M,g)$ as the set of piecewise smooth curves $\A_\infty$. Recall that absolute continuity in Definition~\ref{def:ac} was defined locally, in particular, without using the Riemannian or induced metric space structure. We will now see that, since the equality of the length spaces $(M,d)$ and $(M,d_\ac)$ has been shown in Corollary~\ref{cor1}, the definition of absolute continuity used above coincides with the general metric space definition as well as the measure theoretic approach for absolute continuity as, for example, used in \cite{AGS:gf,Vil:ot}.

\begin{definition}\label{def:ac1}
 Let $I \subseteq \R$ be an interval and $(X,d)$ be a metric space. A path $\gamma\colon I \to X$ is called \emph{metric absolutely continuous\/} if for all $\eps>0$ there is a $\delta>0$ so that for any $n \in \N$ and any selection of disjoint intervals $\{ (a_i,b_i) \}_{i=1}^n$ with $[a_i,b_i] \subseteq I$ whose length satisfies $\sum_{i=1}^n |b_i-a_i| < \delta$, $\gamma$ satisfies
\[
 \sum_{i=1}^n d(\gamma(a_i),\gamma(b_i)) < \eps.
\]
 The class of metric absolutely continuous curves on $X$ is denoted by $\B_{\ac}$.
\end{definition}

\begin{definition}\label{def:ac2}
 Let $I \subseteq \R$ be an interval and $(X,d)$ be a metric space. A path $\gamma\colon I \to X$ is called \emph{measure absolutely continuous\/} if there exists a function $l \in L^1(I)$ such that for all intermediate times $a < b$ in $I$,
\[
 d(\gamma(a),\gamma(b)) \leq \int_a^b l(t) \, dt.
\]
 The class of measure absolutely continuous curves on $X$ is denoted by $\cC_{\ac}$.
\end{definition}

 Note that we usually consider $I$ to be a closed interval, and thus do not have to distinguish between local and global concepts of absolute continuity.

\begin{proposition}\label{prop2}
 Let $M$ be a connected manifold with continuous Riemannian metric $g$ and with distance function $d$ induced by the admissible class $\A_\infty$ of piecewise smooth curves. Then all three notions of absolute continuity coincide,
\[
 \A_\ac = \B_\ac = \cC_\ac.
\]
\end{proposition}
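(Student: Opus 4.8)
The plan is to prove the three equalities by establishing a cycle of inclusions, $\A_\ac \subseteq \cC_\ac \subseteq \B_\ac \subseteq \A_\ac$, where two of the three steps are soft and the remaining one is the crux. Throughout I would work with the induced metric $d$, and crucially invoke Corollary~\ref{cor1}, which gives $d = d_\ac$, so that the metric structure recorded by $d$ is genuinely the one built from the length of absolutely continuous curves.

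First I would show $\cC_\ac \subseteq \B_\ac$. This is the easiest implication and holds in any metric space: if $\gamma \in \cC_\ac$ with control function $l \in L^1(I)$, then for disjoint intervals $\{(a_i,b_i)\}$ we have $\sum_i d(\gamma(a_i),\gamma(b_i)) \leq \sum_i \int_{a_i}^{b_i} l(t)\,dt = \int_{\bigcup_i (a_i,b_i)} l(t)\,dt$, and absolute continuity of the Lebesgue integral of $l$ makes this small once the total length $\sum_i |b_i - a_i|$ is small. Hence $\gamma \in \B_\ac$. Next I would show $\B_\ac \subseteq \A_\ac$: given $\gamma$ metric absolutely continuous, fix a chart $(u,U)$ as in Definition~\ref{def:ac}. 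On a compact subset the Euclidean norm is controlled by a multiple of the Riemannian norm (this comparison is the content of the proof of Proposition~\ref{prop3}, which I may assume), so $|u(\gamma(a)) - u(\gamma(b))| \leq C\, d(\gamma(a),\gamma(b))$ locally. The metric-AC estimate on $\gamma$ then transfers directly to an estimate on $u \circ \gamma$, yielding local absolute continuity of $u \circ \gamma$ in the sense of Definition~\ref{def:acR}, which is exactly $\gamma \in \A_\ac$.

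The remaining and hardest inclusion is $\A_\ac \subseteq \cC_\ac$, which I expect to be the main obstacle since it requires producing an integrable control function $l$ from the mere local coordinate definition of absolute continuity. The natural candidate is $l(t) := \|\gamma'(t)\|_g$, which is well-defined a.e.\ and lies in $L^1(I)$ by Proposition~\ref{prop1}. What must be verified is the pointwise bound $d(\gamma(a),\gamma(b)) \leq \int_a^b \|\gamma'(t)\|_g\,dt$ for all $a < b$. The key point is that the right-hand side is precisely $L(\gamma|_{[a,b]})$, and by Theorem~\ref{thm3} (applicable since $\gamma \in \A_\ac$ by hypothesis) this equals $L_d(\gamma|_{[a,b]})$; but $L_d$ dominates the single chord, $d(\gamma(a),\gamma(b)) \leq L_d(\gamma|_{[a,b]})$, simply by taking the trivial partition in the supremum defining $L_d$ in \eqref{Ld}. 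This chain gives exactly the required inequality with $l = \|\gamma'\|_g$, so $\gamma \in \cC_\ac$.

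Combining the three inclusions closes the cycle and yields $\A_\ac = \B_\ac = \cC_\ac$. The delicate dependency to flag is that the $\A_\ac \subseteq \cC_\ac$ step relies on Theorem~\ref{thm3} (hence on $d = d_\ac$), so one should confirm there is no circularity: Theorem~\ref{thm3} and Corollary~\ref{cor1} were proved without reference to the metric or measure definitions of absolute continuity, so invoking them here is legitimate. I would also remark that since $I$ is taken closed, the distinction between local and global absolute continuity does not intervene, which keeps the norm-comparison argument in the $\B_\ac \subseteq \A_\ac$ step purely local and hence valid on each chart without uniformity concerns.
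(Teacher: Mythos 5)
Your cycle of inclusions and the two soft steps ($\cC_\ac \subseteq \B_\ac$ via absolute continuity of the Lebesgue integral, and $\B_\ac \subseteq \A_\ac$ via the local Lipschitz comparison between the chart map and $d$) are exactly the paper's argument. The problem is in your key step $\A_\ac \subseteq \cC_\ac$: you invoke Theorem~\ref{thm3} to pass from $\int_a^b \|\gamma'\|_g\,dt = L(\gamma|_{[a,b]})$ to $L_d(\gamma|_{[a,b]})$, but Theorem~\ref{thm3} is stated and proved only for \emph{smooth} Riemannian metrics: its proof differentiates $t \mapsto L_d(\gamma|_{[0,t]})$ using the identification of the metric derivative with $\|\gamma'\|_g$ from Corollary~\ref{cor2}, which rests on the exponential map. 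Proposition~\ref{prop2}, however, is asserted for \emph{continuous} $g$, where no exponential map exists. So your proof as written only covers the smooth case; the issue is not the circularity you checked for, but a hypothesis mismatch. Nor can it be repaired by citing the continuous-metric version of $L = L_d$ (Theorem~\ref{thm5}): in the paper's architecture that theorem comes later, and its proof runs through Lemma~\ref{lem5} and Proposition~\ref{prop4}, which themselves need precisely the inclusion $\A_\ac \subseteq \cC_\ac$ with $l = \|\gamma'\|_g$ that you are trying to establish --- that route genuinely would be circular.

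The fix is simpler than your argument and is what the paper does: you never need $L_d$ at all. By Corollary~\ref{cor1} (valid for continuous $g$), $d = d_\ac$, and the bound $d_\ac(\gamma(a),\gamma(b)) \leq L(\gamma|_{[a,b]}) = \int_a^b \|\gamma'(t)\|_g\,dt$ holds directly from the definition \eqref{dac} of $d_\ac$ as an infimum, because the restriction $\gamma|_{[a,b]}$ is itself an admissible absolutely continuous competitor joining $\gamma(a)$ to $\gamma(b)$. With $l = \|\gamma'\|_g \in L^1$ (Proposition~\ref{prop1}) this gives $\gamma \in \cC_\ac$ with no appeal to the equality $L = L_d$ in any regularity class.
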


\begin{proof}
 ($\cC_\ac \subseteq \B_\ac$) Let $\gamma\colon [0,1] \to M$ be a path in $\cC_\ac$ and $l \in L^1([0,1])$ as in Definition~\ref{def:ac2}. Then $F(s) \colon= \int_0^s l(t) \, dt$ is an absolutely continuous function in $\R$ and therefore, for any subinterval $[a,b] \subseteq [0,1]$, by Definition~\ref{def:ac2},
\begin{align*}
 d(\gamma(a),\gamma(b)) \leq \int_{a}^{b} l(t) \, dt \leq |F(b) - F(a)|.
\end{align*}

 ($\B_\ac \subseteq \A_\ac$) Let $\gamma \in \B_\ac$ and $(u,U)$ be any chart on $M$. Since $u$ is a diffeomorphism, it is Lipschitz on any set $\gamma([a,b]) \subseteq U$. Thus
\[
 \left| u(\gamma(b)) - u(\gamma(a)) \right| \leq C d(\gamma(a),\gamma(b))
\]
 for some constant $C>0$. Since $\gamma$ is absolutely continuous with respect to the induced metric $d$, $u \circ \gamma$ is locally absolutely continuous with respect to the Euclidean norm $| . |$.

 ($\A_\ac \subseteq \cC_\ac$) For $\gamma \in \A_\ac$, consider $l \colon = \| \gamma' \|_g \in L^1$ (see Proposition~\ref{prop1}). Since $d=d_\ac$ by Corollary~\ref{cor1}, it follows that for any $a < b$ in $[0,1]$,
\[
 d(\gamma(a),\gamma(b)) \leq \int_a^b l(t) \, dt. \qedhere
\]
\end{proof}

 We are now in a position to prove completeness of the metric space $(\A_\ac(M),D_\ac)$ introduced above.
 
\begin{proposition}
 Let $M$ be a connected manifold with a continuous Riemannian metric $g$. If $(M,d)$ is complete as a metric space, then so is the space $\A_\ac(M)$ of absolutely continuous paths together with the variational metric $D_\ac$ introduced in Definition~\ref{def:Dac}.
\end{proposition}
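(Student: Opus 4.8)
The plan is to take a $D_\ac$-Cauchy sequence $(\gamma_k)_k$ in $\A_\ac(M)$ and manufacture a limit inside $\A_\ac(M)$. Since $D_\ac$ dominates each of its two summands, such a sequence is simultaneously Cauchy in both factors: it is uniformly Cauchy with respect to $d$, because $\sup_{t} d(\gamma_k(t),\gamma_l(t)) \le D_\ac(\gamma_k,\gamma_l)$, and the scalar speeds $s_k := \|\gamma_k'\|_g$ form a Cauchy sequence in $L^1(I)$, because $\int_I|s_k-s_l| \le D_\ac(\gamma_k,\gamma_l)$. First I would extract the two limits independently. As $(M,d)$ is complete, the uniform Cauchy condition produces a \emph{continuous} path $\gamma\colon I\to M$ with $\sup_t d(\gamma_k(t),\gamma(t))\to 0$; and completeness of $L^1(I)$ produces a function $s\in L^1(I)$ with $\int_I|s_k-s|\to 0$.

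Next I would verify that $\gamma$ is genuinely absolutely continuous, which is cleanest through the identification $\A_\ac=\cC_\ac$ of Proposition~\ref{prop2}. Each $\gamma_k\in\cC_\ac$ satisfies $d(\gamma_k(a),\gamma_k(b))\le\int_a^b s_k$; letting $k\to\infty$ and using uniform convergence together with continuity of $d$ on the left, and $L^1$-convergence on the right, yields $d(\gamma(a),\gamma(b))\le\int_a^b s$ for all $a<b$. Hence $\gamma\in\cC_\ac=\A_\ac$, with $s$ an admissible majorant for its metric speed. At this point the only thing preventing $D_\ac(\gamma_k,\gamma)\to 0$ is the identity $\|\gamma'\|_g=s$ almost everywhere, since this identity would give $\int_I|s_k-\|\gamma'\|_g|=\int_I|s_k-s|\to 0$ and the supremum term already tends to $0$.

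Establishing $\|\gamma'\|_g=s$ is the crux, and I expect it to be the main obstacle. Working in a chart over the compact closure of the (finitely many) chart neighbourhoods covering $\gamma(I)$, the velocities of $u\circ\gamma_k$ are dominated by a multiple of $s_k$, and an $L^1$-convergent sequence is uniformly integrable, so by Dunford--Pettis these velocities are weakly relatively compact in $L^1$; combined with the distributional convergence forced by uniform convergence of $\gamma_k$, this identifies the weak limit with the velocity of $u\circ\gamma$. Lower semicontinuity of the convex speed functional $v\mapsto\int\sqrt{g_{ij}(\gamma)\,v^iv^j}$ under this weak convergence (using continuity of $g$ and uniform convergence of $\gamma_k$) then delivers one inequality, $\|\gamma'\|_g\le s$ a.e. The reverse inequality is where the difficulty genuinely lies: passing to the scalar speed destroys all information about \emph{directional} oscillation, so weak convergence of velocities alone cannot give it, and indeed the metric speed $\lim_{\delta\to0}d(\gamma(t),\gamma(t+\delta))/|\delta|$ furnished by the metric-derivative identity (Corollary~\ref{cor2} and its continuous-metric extension) is only known to be the \emph{minimal} majorant in the $\cC_\ac$ sense, whereas $s$ is a priori merely \emph{some} majorant. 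To close the gap I would try to upgrade the use of \emph{strong} (not just weak) $L^1$-convergence of the speeds, arguing that a strong limit of the $s_k$ cannot exceed the minimal majorant of the limiting curve; this is the delicate point, and it is precisely here that the Riemannian structure, rather than the bare length-space structure, must enter in an essential way.
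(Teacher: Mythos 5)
Your first two steps reproduce, almost verbatim, the paper's entire proof: split the Cauchy condition into its two summands, use completeness of $(M,d)$ to get a continuous uniform limit $\gamma$, use completeness of $L^1(I)$ to get a limit speed $s$, and pass to the limit in $d(\gamma_k(a),\gamma_k(b))\le\int_a^b s_k$ to conclude $\gamma\in\cC_\ac=\A_\ac$. The paper stops exactly there. But, as you correctly insist, that is not yet a proof of completeness: one must still show $D_\ac(\gamma_k,\gamma)\to 0$, i.e.\ $\int_I\bigl|\,\|\gamma_k'\|_g-\|\gamma'\|_g\bigr|\,dt\to 0$, and this requires precisely the identity $\|\gamma'\|_g=s$ a.e.\ that you isolate as the crux. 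So the gap you flag is genuine, and it is not a defect of your proposal relative to the paper: the paper's own proof has the same hole and does not even acknowledge it.

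The more important point is that the gap cannot be closed, because the identity $\|\gamma'\|_g=s$ is false in general and the proposition itself fails. Take $M=\R^2$ with the Euclidean metric, so that $(M,d)$ is complete, and consider on $I=[0,1]$ the sawtooth curves
\[
\gamma_k(t):=\bigl(t,\phi_k(t)\bigr),\qquad \phi_k(t):=\operatorname{dist}\Bigl(t,\tfrac{1}{k}\mathbb{Z}\Bigr).
\]
Each $\gamma_k$ is piecewise linear with $\phi_k'=\pm1$ a.e., so $\|\gamma_k'\|_g=\sqrt{2}$ a.e.; hence the integral term in $D_\ac(\gamma_k,\gamma_l)$ vanishes identically and $D_\ac(\gamma_k,\gamma_l)=\sup_t|\phi_k(t)-\phi_l(t)|\le\tfrac{1}{2k}+\tfrac{1}{2l}$, so the sequence is $D_\ac$-Cauchy. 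Any $D_\ac$-limit would have to coincide with the uniform limit $\sigma(t)=(t,0)$, but $\int_0^1\bigl|\,\|\gamma_k'\|_g-\|\sigma'\|_g\bigr|\,dt=\sqrt{2}-1$ for every $k$, so the sequence has no limit and $(\A_\ac(\R^2),D_\ac)$ is not complete. This is exactly the mechanism you anticipated: the scalar speeds retain no directional information, the speed functional is only lower semicontinuous (your inequality $\|\gamma'\|_g\le s$ is the best available, and here it is strict), and your hope that \emph{strong} $L^1$-convergence of the $s_k$ would rescue the reverse inequality cannot work --- in the example the $s_k$ are constant, so their convergence is as strong as one could ask. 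A correct completeness statement would need a metric that controls the velocities as vectors (for instance $W^{1,1}$-type convergence in charts), not merely their norms.
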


\begin{proof}
 Since $(M,d)$ is a metric space, so is $(\A_\ac(M),D_\ac)$. Suppose $(\gamma_n)_n$ is a Cauchy sequence of absolutely continuous paths $\gamma_n \colon I \to M$ with respect to the variational topology given by $D_\ac$. Since $(M,d)$ is complete and because of uniform convergence of continuous curves, the pointwise defined limit
 \[
 \gamma(t) := \lim_{n \to \infty} \gamma_n(t)
 \]
 is a continuous curve in $M$. Moreover, for any $a,b \in I$,
 \[
 d(\gamma(a),\gamma(b)) = \lim_{n \to \infty} d(\gamma_n(a),\gamma_n(b)) \leq \lim_{n \to \infty} \int_a^b \| \gamma_n'(t) \|_g \, dt = \int_a^b \lim_{n \to \infty} \| \gamma_n'(t) \|_g \, dt,
 \]
 thus by Definition~\ref{def:ac2} with $l = \lim_{n \to \infty} \| \gamma_n' \|_g \in L^1 (I)$ the limiting curve $\gamma$ is absolutely continuous. 
\end{proof}

%-----------------------------------------------------------------------------------------------

\subsection{Relations between classes of curves}

 Let $\A_1$ and $\A_0$ denote the class of piecewise $\mathcal{C}^1$ and continuous curves, respectively, and let $\A_\lip$ denote the class of Lipschitz curves and $\A_{H^1}$ the class of $H^1$ curves (see, e.g., \cite[Sec.\ 2.3]{Kli:rg}).
 On any differentiable manifold, the following inclusions hold
\[
 \A_\infty \subseteq \A_1 \subseteq \A_\lip \subseteq \A_{H^1} \subseteq \A_\ac \subseteq \A_\rec \subseteq \A_0.
\]
 We have seen that the class $\A_\ac$ of absolutely continuous curves on a Riemannian manifold induce the same metric space structure as $\A_\infty$, thus so do $\A_1$, $\A_\lip$ and $\A_{H^1}$. In fact, by \cite[Lem.~1.1.4]{AGS:gf}, absolutely continuous curves are just Lipschitz curves if one uses reparametrizations that are increasing and absolutely continuous. The length of rectifiable curves $\A_\rec$ and continuous curves $\A_0$ can only be defined if a metric space structure is already present, since the arc-length definition is not meaningful in this setting (recall Example~\ref{ex:Cantor}).

%%%%%%%%%%%%%%%%%%%%%%%%%%%%%%%%%%%%%%%%%%%%%%%%%%%%%%%%%%%%%%%%%%%%%%%%%%%%%%%%%%%%%%%%%%%%%%%%

\section{Manifolds with continuous Riemannian metrics}
\label{section3}

 In Proposition~\ref{prop1} we proved that the arc-length of absolutely continuous curves is well-defined even if the Riemannian metric $g$ is only continuous (or even bounded). In this case, however, we do not have the usual tools of Riemannian geometry at hand, e.g.\ geodesic equations, the exponential map, curvature and so on. Despite this handicap we will see that the metric space structure of such Riemannian manifolds of low regularity is not so different from those of smooth Riemannian manifolds. By gathering and introducing new tools, we will also see that Theorem~\ref{thm1} holds in the low-regularity situation.

 Throughout we call any pointwise defined positive definite, symmetric $(0,2)$-tensor field $g$ on a differentiable connected manifold $M$ a \emph{Riemannian metric\/}. In other words, $g$ should be seen as a positive definite symmetric tensor field that is not necessarily smooth. Moreover, we assume that the arc-length $L(\gamma)$ for $\gamma \in \A_\infty$ is well-defined for $g$. We will motivate why continuity of $g$ is desirable.

%-----------------------------------------------------------------------------------------------

\subsection{Metric space structure}
\label{subsection3.1}

 Suppose that $g$ is a Riemannian metric of low regularity. Let $\A_\ac$, $L$ and $d = d(g,\A_\ac,L)$ be defined as in Section~\ref{section1}. The triple $(M,\A_\ac,L)$ should define a length structure on $M$ and induce a metric $d$ on $M$. From Definition~\eqref{d} we immediately deduce that $d$ is symmetric, nonnegative and satisfies the triangle inequality. Thus $d$ is a pseudo-metric on $M$. Continuity of the Riemannian metric implies more.

\begin{proposition}\label{prop3}
 Let $M$ be a connected manifold with continuous Riemannian metric $g$. The following properties hold:
\begin{enumerate}
 \item $(M,\A_\ac,L)$ is an admissible length structure on $M$, that is, the length of paths is additive, continuous on segments, invariant under reparametrizations and agrees with the topology on $M$ in the sense that for a neighborhood $U$ of a point $p$ the length of paths connecting $p$ with the complement of $U$ is bounded away from $0$.
 \item The distance function $d$ as defined in \eqref{d} induces the manifold topology on $M$.
\end{enumerate}
 In particular, $(M,d)$ is a length space.
\end{proposition}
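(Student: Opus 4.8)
The plan is to base both assertions on a single local estimate: on compact coordinate sets the Riemannian norm $\|\cdot\|_g$ is equivalent to the Euclidean norm $|\cdot|$. Fix a chart $(u,V)$ with $\overline V$ compact and contained in a chart domain, and identify tangent vectors with their coordinate representations. Since the components $g_{ij}$ are continuous, the function $(p,v)\mapsto g_p(v,v)$ is continuous and positive on the compact set $\overline V\times S^{n-1}$ (the unit coordinate sphere), hence attains a positive minimum $c^2$ and a finite maximum $C^2$. By homogeneity this yields
\[
 c\,|v| \le \|v\|_g \le C\,|v|, \qquad p\in\overline V,\ v\in T_pM,
\]
with $0<c\le C$ depending only on $\overline V$. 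This is the estimate referenced in the proof of Theorem~\ref{thm2}, and it is the engine for everything below.

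For (1), additivity of $L$ is additivity of the integral, invariance under monotone reparametrization is the change-of-variables formula, and continuity of $t\mapsto L(\gamma|_{[0,t]})$ on segments follows from $\|\gamma'\|_g\in L^1$ (Proposition~\ref{prop1}) together with absolute continuity of the integral. The only nonformal point is compatibility with the topology. Given $p$ and a neighborhood $U$, I would take the chart $(u,V)$ as above with $p\in V$ and a radius $r>0$ so small that the coordinate ball $B:=u^{-1}(B(u(p),r))$ satisfies $\overline B\subseteq U\cap V$. Any path $\gamma$ from $p$ to a point of $M\setminus U$ must leave $B$; if $\tau$ denotes the first exit time, then $u\circ\gamma|_{[0,\tau]}$ joins $u(p)$ to the sphere of radius $r$, so its Euclidean length is at least $r$, and the lower estimate gives $L(\gamma)\ge L(\gamma|_{[0,\tau]})\ge c\,r>0$. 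Thus the lengths of all such paths are bounded away from $0$.

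For (2), I would establish a two-sided comparison near each point. The upper bound is immediate: for $p,q$ in a small coordinate ball the straight segment $t\mapsto u^{-1}\big((1-t)\,u(p)+t\,u(q)\big)$ is smooth and stays in $V$ by convexity, so $d(p,q)\le C\,|u(p)-u(q)|$. For the lower bound, shrink $r$ so that $\overline{B(u(p),2r)}\subseteq u(V)$ and take $q$ with $|u(p)-u(q)|<r$. A path from $p$ to $q$ that remains in $u^{-1}(B(u(p),2r))$ has $g$-length at least $c$ times the Euclidean length of its coordinate projection, hence at least $c\,|u(p)-u(q)|$; a path that leaves this set must travel coordinate-distance at least $2r>|u(p)-u(q)|$ from $u(p)$, so again has $g$-length at least $c\,|u(p)-u(q)|$ by the escape estimate of (1). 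Taking the infimum,
\[
 c\,|u(p)-u(q)|\le d(p,q)\le C\,|u(p)-u(q)|,
\]
which sandwiches the $d$-topology between two copies of the chart topology, so $d$ induces the manifold topology. In particular $d(p,q)>0$ for $p\ne q$ — using the escape estimate of (1) when $p$ and $q$ do not lie in a common small ball — so the pseudo-metric $d$ is in fact a genuine metric.

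Finally, $(M,d)$ is a length space because $d$ is by construction \eqref{d} the intrinsic metric of the length structure $(M,\A_\ac,L)$, which (1) shows to be admissible; the conclusion then follows from the general theory of admissible length structures (e.g.\ \cite[Sec.\ 2.4]{BBI}). The step I expect to be the main obstacle is the lower bound on $d$: the formal properties of $L$ and the upper bound are routine, but excluding the possibility that excursions leaving the chart lower the infimum requires the first-exit argument, and it is precisely there that continuity of $g$ enters, through the uniform positive constant $c$ on $\overline V$.
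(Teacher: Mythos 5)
Your proof is correct and follows essentially the same route as the paper: a two-sided comparison of $\|\cdot\|_g$ with the Euclidean norm on a compact coordinate set (the paper obtains it via the eigenvalues of $e_U^{-1}\circ g$, you via compactness of $\overline V\times S^{n-1}$ and homogeneity), followed by a first-exit argument bounding from below the length of any path escaping the coordinate ball. The only difference is one of detail: your part (2) writes out in full the local sandwich $c\,|u(p)-u(q)|\le d(p,q)\le C\,|u(p)-u(q)|$, including the case of paths leaving the chart, which the paper compresses into a citation of Petersen's textbook.
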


\begin{proof}
 (i) The class $\A_\ac$ of absolutely continuous paths is closed under restrictions, concatenations and monotonous reparametrizations. The length $L$ of paths,
\[
 L(\gamma) = \int_I \| \gamma' \|_g, \qquad \gamma \in \A_\ac,
\]
 is additive, continuous on segments and invariant under reparametrizations.

 It remains to be shown that the length structure is compatible with the topology on $M$. Let $p \in M$ and $(u,U)$ be a chart of $M$ at $p$ with $u(p)=0$, $u=(x^1,...,x^n)$. Pick $r>0$ such that $\overline{B_r(0)} = \{ x \in \R^n \, | \, |x| \leq r \} \subseteq u(U)$ and let $K := u^{-1}(\overline{B_r(0)})$. The Euclidean metric with respect to $U$ is given by
\[
 e_U := \delta_{ij} \, dx^i \otimes dx^j.
\]
 Both metrics $g$ and $e_U$ are nondegenerate and thus induce isomorphisms $TM \to T^*M$. Let $\eta_1,...,\eta_n$ denote the (positive) eigenvalues and $v_1,...,v_n$ the eigenvectors of the symmetric tensor $e_U^{-1} \circ g$, i.e.\
\[
 g(v_i,.) = \eta_i \, e_U(v_i,.).
\]
 By $\underline{\eta}, \overline{\eta}$ we denote the smallest and largest eigenvalues, respectively, and define $\lambda,\mu \colon U \to \R^+$ by $\lambda := \sqrt{n \underline{\eta}}$ and $\mu := \sqrt{n \overline{\eta}}$. Thus for all $q \in U$, $v \in T_qM$,
\begin{align*}
 \lambda(q) \, \|v\|_{e_U} \leq \|v\|_g \leq \mu(q) \, \|v\|_{e_U}.
\end{align*}
 Since $g$ is a continuous Riemannian metric, the functions $\lambda, \mu \colon U \to \R^+$ are continuous, and thus on the compact set $K$,
\[
 \lambda_0 := \min_K \lambda > 0 \qquad \mathrm{and} \qquad \mu_0 := \max_K \mu < \infty.
\]
 Therefore,
\begin{align}\label{l0m0}
 \lambda_0 \, \|v\|_{e_U} \leq \|v\|_g \leq \mu_0 \, \|v\|_{e_U}, \qquad v \in T_q M, q \in K.
\end{align}
 Let $y \in M \setminus \operatorname{int} (K)$ be connected to $p$ by a path $\gamma \colon I \to M$. Choose $t_0 \in I $ such that $\gamma(t_0) \in \partial K \cap \gamma(I)$ then
\[
 0 < r \lambda_0 = \lambda_0 | (u \circ \gamma) (t_0) | \leq \int_0^{t_0} \lambda_0 \| \gamma'(t) \|_{e_U} \, dt \stackrel{\eqref{l0m0}}{\leq} \int_0^{t_0} \| \gamma'(t) \|_g \, dt \leq L_g(\gamma), 
\]
 where the first inequality is due to the fact that $d=d_\mathrm{ac}$ in Euclidean space.
 Hence the length of paths connecting $p$ with $M \setminus \operatorname{int} (K)$ is bounded away from $0$.

 (ii) The calculation in (i) implies that $(M,d)$ is a metric space. A short calculation based on \eqref{l0m0} implies that $d$---since locally Euclidean---also induces the topology of $M$ (cf.\ the proof of \cite[Ch.\ 5.3, Thm.\ 12]{Pet:rg}).
\end{proof}

 The following examples further motivate the study of manifolds that are equipped with continuous Riemannian metrics.

\begin{example}
 Alexandrov spaces with curvature bounded from below (and above) are connected complete locally compact length spaces with finite Hausdorff dimension that satisfy a triangle comparison condition to obtain a notion of curvature bounds (see, e.g., \cite{ABN:genRie,BGP:axcbb}). They were introduced as generalizations of Riemannian manifolds with sectional curvature bounds by turning Toponogov's theorem into a definition. Otsu and Shioya~\cite{OS:Alex} proved that an $n$-dimensional Alexandrov space $X$ of curvature bounded from below carries---minus some singular points---a $\mathcal{C}^0$-Riemannian structure. $X$ is a $\mathcal{C}^0$-Riemannian manifold in the ordinary sense whenever $X$ contains no singular points.
\end{example}

\begin{example}
 A Busemann G-space is a finitely compact metric space with an intrinsic metric, in which the conditions of local prolongability of segments and of the nonoverlapping of segments are satisfied. Such spaces are geodesically complete. G-spaces satisfying an additional axiom related to boundedness of curvature generalize complete Riemannian manifolds of class $\mathcal{C}^k$ for $k \geq 2$. It can be shown that they are in fact $\mathcal{C}^1$-manifolds with continuous Riemannian metrics \cite{Ber:BusG}.
\end{example}

In general, different Riemannian metrics will not induce the same length structure. However, we can prove equivalence on compact sets for metrics induced by continuous Riemannian structures.

\begin{lemma}\label{lem4}
 Let $M$ be a compact connected manifold and $g$ and $h$ two continuous Riemannian metrics on $M$. Then there exist constants $c,C>0$ such that
\[
 c \, d_h(p,q) \leq d_g(p,q) \leq C \, d_h(p,q), \qquad p,q \in M.
\]
\end{lemma}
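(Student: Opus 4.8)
The plan is to reduce the statement to a pointwise comparison of the two Riemannian norms on $TM$ and then upgrade this to a global comparison using compactness of $M$, exactly in the spirit of the proof of Proposition~\ref{prop3}. At each point $p \in M$ the bilinear forms $g_p$ and $h_p$ are both positive definite inner products on the finite-dimensional space $T_pM$, hence equivalent as norms; the real task is to make the equivalence constants uniform in $p$.

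First I would consider the field of endomorphisms $A := h^{-1} \circ g$, where $g$ and $h$ are viewed as the isomorphisms $TM \to T^*M$ they induce. At each point $A_p$ is self-adjoint and positive definite with respect to $h_p$, since $h_p(A_p v, w) = g_p(v,w)$ is symmetric and positive definite. Denoting by $\underline{\eta}(p)$ and $\overline{\eta}(p)$ the smallest and largest eigenvalues of $A_p$, one obtains the pointwise bounds
\[
\sqrt{\underline{\eta}(p)}\, \|v\|_h \leq \|v\|_g \leq \sqrt{\overline{\eta}(p)}\, \|v\|_h, \qquad v \in T_pM.
\]
Because $g$ and $h$ depend continuously on $p$, the eigenvalues of $A_p$, and hence the functions $\underline{\eta},\overline{\eta} \colon M \to \R^+$, are continuous. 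Compactness of $M$ then yields global constants
\[
c := \min_M \sqrt{\underline{\eta}} > 0 \qquad \text{and} \qquad C := \max_M \sqrt{\overline{\eta}} < \infty,
\]
so that $c\, \|v\|_h \leq \|v\|_g \leq C\, \|v\|_h$ for all $v \in T_pM$ and all $p \in M$.

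Finally I would transfer these pointwise bounds to lengths and then to distances. Since the class $\A_\ac$ of absolutely continuous curves is defined chartwise and does not depend on the choice of Riemannian metric, both $d_g$ and $d_h$ arise by taking the infimum of the corresponding arc-length over the \emph{same} set of curves. For any $\gamma \in \A_\ac$ I integrate the pointwise inequality to obtain $c\, L_h(\gamma) \leq L_g(\gamma) \leq C\, L_h(\gamma)$, and taking the infimum over all $\gamma$ joining $p$ to $q$ gives $c\, d_h(p,q) \leq d_g(p,q) \leq C\, d_h(p,q)$, as claimed.

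The one point requiring care is the continuity of the eigenvalue functions $\underline{\eta},\overline{\eta}$: these are the extreme roots of the characteristic polynomial of $A_p$, whose coefficients vary continuously with $p$, so their continuity follows from the continuous dependence of the roots of a polynomial on its coefficients. Once continuity is in hand, compactness does the rest, and this is precisely the only place where the compactness hypothesis enters; without it the infima of $\underline{\eta}$ and $\overline{\eta}$ could degenerate to $0$ or $\infty$ and the equivalence would fail.
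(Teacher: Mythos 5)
Your proof is correct, and it departs from the paper's own argument in two ways worth noting. For the norm comparison, the paper reuses its chart-level estimate \eqref{l0m0}: both $g$ and $h$ are compared to the local Euclidean metric $e_U$ on compact pieces of charts and the two estimates are chained together (this is \eqref{ghg}); you instead compare $g$ to $h$ directly through the $h$-self-adjoint endomorphism field $A = h^{-1}\circ g$, whose extreme eigenvalues give globally defined continuous functions on $M$ --- the same eigenvalue technique as in Proposition~\ref{prop3}, but with $h$ as the reference metric, so no finite chart cover is needed. More substantively, the conclusions differ: the paper finishes by invoking the existence of length-minimizing curves for $d_g$ and $d_h$ on a compact manifold \cite[Lem.~1.12]{Gro:ms}, whereas you integrate the pointwise bound $c\,\|v\|_h \leq \|v\|_g \leq C\,\|v\|_h$ along each curve and take infima over the common class of competitors (correctly observing that $\A_\ac$ is defined chartwise, so $d_g$ and $d_h$ are infima over the \emph{same} set of curves). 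Your ending is more elementary --- it needs no Hopf--Rinow-type existence result --- and it makes transparent that compactness enters only through the uniform eigenvalue bounds. It is also instructive to see why your argument does not extend to the noncompact analogue (Theorem~\ref{thm4}): for $p,q$ in a compact subset $K$ of a noncompact $M$, near-minimizing curves may leave $K$, so pointwise bounds on $K$ alone no longer control the infimum; your proof works here precisely because $K=M$ and every competitor stays in the region where the uniform bounds hold.
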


\begin{proof}
 By \eqref{l0m0} there exist positive functions $\lambda^g_0, \mu^g_0$ and $\lambda^h_0, \mu^h_0$ on $M$ such that 
\begin{align}\label{ghg}
 \lambda^g_0 \lambda^h_0 \| v \|_h \leq \lambda^h_0 \mu^h_0 \| v \|_g \leq \mu^g_0 \mu^h_0 \| v \|_h.
\end{align}
 Compactness of $M$ furthermore implies the existence of length-minimizing curves with respect to $d_g$ and $d_h$ \cite[Lem.\ 1.12]{Gro:ms}, and thus concludes the proof.
\end{proof}

 The noncompact analogue of Lemma~\ref{lem4}---even on compact subsets---is more involved because in the general situation little is known about the existence of (locally) length-minimizing curves. We now show that paths whose length approximates the distance $d$ are trapped in certain neighborhoods $U$ of compact subsets $K$. There is no need to even assume geodesic completeness.

\begin{theorem}\label{thm4}
 Let $M$ be a connected manifold and $g$ and $h$ two continuous Riemannian metrics on $M$. Then on every compact set $K$ in $M$, there exist constants $c,C>0$ such that
\[
 c \, d_h(p,q) \leq d_g(p,q) \leq C \, d_h(p,q), \qquad p,q \in K.
\]
\end{theorem}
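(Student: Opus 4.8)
The plan is to localize the global infima \eqref{d} to a single relatively compact neighborhood of $K$ and then to reduce to the pointwise comparison \eqref{l0m0}. First I would choose a relatively compact open set $U$ with $K \subset U$ (possible since $M$ is locally compact and $K$ is compact). Covering $\overline U$ by finitely many charts and applying \eqref{l0m0} on each---exactly as in the proof of Lemma~\ref{lem4}---yields uniform constants $0 < c_0 \le C_0$ with
\[
 c_0 \,\|v\|_h \le \|v\|_g \le C_0\,\|v\|_h, \qquad v \in T_qM,\ q \in \overline U.
\]
The decisive quantity is the barrier
\[
 \alpha := \min\bigl(d_g(K, M\setminus U),\, d_h(K, M\setminus U)\bigr),
\]
which is strictly positive because $K$ is compact, $M\setminus U$ is closed and disjoint from $K$, and by Proposition~\ref{prop3} both $d_g$ and $d_h$ are genuine metrics inducing the manifold topology. (If $M\setminus U=\emptyset$ then $M$ is compact and we are in the situation of Lemma~\ref{lem4}.) This is precisely the point where completeness can be dispensed with: positivity of $\alpha$ follows from the metric structure alone, not from any compactness of balls.

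Next I would split $K\times K$ along the diagonal, fixing $\epsilon_0 := \alpha/\bigl(2(1+C_0)\bigr)$. For the \emph{off-diagonal} part $\{(p,q)\in K\times K : d_h(p,q)\ge \epsilon_0\}$---a compact set---continuity and positivity of $d_g$ off the diagonal give $\beta := \min d_g > 0$, while $d_g \le D_g := \max_{K\times K} d_g<\infty$ and $d_h \le D_h := \max_{K\times K} d_h<\infty$; hence $\tfrac{\beta}{D_h}\,d_h \le d_g \le \tfrac{D_g}{\epsilon_0}\,d_h$ there, with no reference to paths at all. For the \emph{near-diagonal} part $d_h(p,q)<\epsilon_0$ I would invoke the trapping: choosing a path $\sigma$ from $p$ to $q$ with $L_h(\sigma) \le d_h(p,q)+\eta < \alpha$ forces $\sigma\subset U$ (it cannot reach $M\setminus U$ without accumulating $h$-length $\ge \alpha$), so $d_g(p,q)\le L_g(\sigma)\le C_0 L_h(\sigma)$, and letting $\eta\to 0$ gives $d_g(p,q)\le C_0\,d_h(p,q)$. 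For the reverse inequality I would run the symmetric argument with a near-$g$-minimizer $\sigma'$: since $d_g(p,q)\le C_0 d_h(p,q)< C_0\epsilon_0 < \alpha$, such $\sigma'$ is again trapped in $U$, whence $d_h(p,q)\le c_0^{-1} L_g(\sigma')$ and thus $d_g(p,q)\ge c_0\,d_h(p,q)$. Taking $c:=\min(c_0,\beta/D_h)$ and $C:=\max(C_0,D_g/\epsilon_0)$ then finishes the proof.

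The main obstacle is the near-diagonal trapping, and in particular the bookkeeping that makes it work in both directions simultaneously: one must ensure that not only short $h$-near-minimizers but also short $g$-near-minimizers stay inside $U$, which is why $\epsilon_0$ is chosen below $\alpha/C_0$ rather than merely below $\alpha$. Everything else is soft---off the diagonal the estimate is a pure compactness statement about the two continuous metrics, and the uniform comparison on $\overline U$ is the local estimate \eqref{l0m0} already used for Lemma~\ref{lem4}.
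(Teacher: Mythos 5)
Your proof is correct, and its core mechanism---trapping near-minimizing curves inside a fixed compact set on which the fiberwise comparison \eqref{l0m0} holds uniformly---is exactly the engine of the paper's proof as well; but the two arguments are organized quite differently. The paper argues by contradiction: it assumes points $p_n,q_n\in K$ with $d_g(p_n,q_n)>n\,d_h(p_n,q_n)$, passes to subsequences converging to a common limit $p\in K$ (two sequences violating every uniform bound must collapse to the diagonal, since $d_g$ is bounded on $K\times K$ while $d_h(p_n,q_n)$ would otherwise stay bounded away from $0$), and then runs the trapping argument purely locally: $h$-near-minimizers between points of $B^h_{r_0/4}(p)$ cannot leave the compact ball $\overline{B^h_{r_0}(p)}$, so \eqref{ghg} yields $d_g\le C\,d_h$ on a neighborhood of $p$, contradicting the assumption; the reverse inequality then follows by exchanging the roles of $g$ and $h$. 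You instead give a direct, quantitative proof: one relatively compact neighborhood $U\supseteq K$, a uniform comparison on $\overline U$, the barrier $\alpha>0$, and a splitting of $K\times K$ into a near-diagonal part (your trapping step, where the choice $\epsilon_0<\alpha/(1+C_0)$ correctly guarantees that $g$-near-minimizers, and not only $h$-near-minimizers, stay in $U$) and an off-diagonal part handled by soft compactness of $\{(p,q)\in K\times K : d_h(p,q)\ge\epsilon_0\}$. What each buys: your route produces explicit constants and settles both inequalities in a single pass, at the price of the diagonal bookkeeping and its harmless degenerate cases (off-diagonal set empty, $M\setminus U=\emptyset$); the paper's contradiction argument is shorter because the subsequence step silently absorbs your off-diagonal case, but it is non-constructive and treats one inequality at a time. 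Both proofs dispense with completeness and existence of minimizers for precisely the reason you identify: only near-minimizers and a positive barrier (for the paper, the radius $r_0$; for you, $\alpha$) are ever needed.
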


\begin{proof}
 We argue by contradiction. Suppose that for all $n \in \N$ there are $p_n, q_n \in K$ such that
\begin{align}\label{gh:n}
 d_g(p_n,q_n) > n \, d_h(p_n,q_n).
\end{align}
 By passing on to subsequences we can assume that $(p_n)_n$ and $(q_n)_n$ convergence to the same $p \in K$.

 Let us analyze neighborhoods of $p$. Open balls at $p \in M$ with radius $r>0$ with respect to the distance function $d_h$ induced by the Riemannian metric $h$ will be denoted by
\[
 B^h_r(p) := \{ q \in M \, | \, d_h(p,q) < r \} .
\]
 Since $M$ is locally compact and, by Proposition~\ref{prop3}, $d_h$ induces the manifold topology, there exists an $r_0>0$ such that $\overline{B^h_{r_0}(p)}$ is compact. Let $r = \frac{r_0}{4}$ and $x,y \in B^h_r(p)$. By definition~\eqref{d}, for all $\eps \in (0,r)$, there exists a path $\gamma_\eps \colon [0,1] \to M$ in $\A_1$ between $x$ and $y$ such that
\[
 L(\gamma_\eps) < d_h(x,y) + \eps.
\]
 These paths $\gamma_\eps$ are mapped to the open neighborhood $B^h_{r_0}(p)$ of $p$ since for $t \in [0,1]$,
\begin{align*}
 d_h(p,\gamma_\eps(t)) &\leq d_h(p,x) + d_h(x,\gamma_\eps(t))
 \leq d_h(p,x) + L_h(\gamma_\eps) \\
&\leq d_h(p,x) + d_h(x,y) + \eps < 4r = r_0
\end{align*}
 Continuity of $g$ and $h$ imply that the same inequalities as in \eqref{ghg} hold on $\overline{B^h_{r_0}(p)}$. In particular, for $C = \frac{\mu^g_0}{\lambda^h_0} > 0$,
\[
 d_g(x,y) \leq L_g(\gamma_\eps) \leq C L_h(\gamma_\eps) < C d_h(x,y) + C \eps.
\]
 Since $C$ is independent of $\eps$ as well as $x$ and $y$, this implies 
\begin{align}\label{gh}
 d_g(x,y) \leq C \, d_h(x,y), \qquad x,y \in B^h_r(p).
\end{align}
 By construction both sequences $(p_n)_n$ and $(q_n)_n$ converge to $p$ and thus, by \eqref{gh}, for sufficiently large $n$,
\[
 d_g(p_n,q_n) \leq C \, d_h(p_n,q_n).
\]
 This contradicts \eqref{gh:n}.
\end{proof}

\begin{remark}
 Theorem~\ref{thm4} is a very specific property of Riemannian manifolds. It does not hold on arbitrary (compact) metric spaces. For example, on the interval $[0,1]$ the two metrics
\[
 d_1(x,y) = |x-y| \qquad \text{and} \qquad d_2(x,y) = \sqrt{|x-y|}
\]
 induce the same topology. By inserting $x=0$ and $y_n = \frac{1}{n}$, however, it becomes clear that $d_1$ and $d_2$ are not metrically equivalent since
\[
 \lim_{n \to \infty} \frac{d_2(x,y_n)}{d_1(x,y_n)} = \infty.
\]
\end{remark}

%-----------------------------------------------------------------------------------------------

\subsection{Length structure on manifolds with continuous Riemannian metrics}

 On a manifold $M$ with smooth Riemannian metric $g$, the exponential map is a local diffeomorphism. This fact has been used in the proof of Theorem~\ref{thm1} and Theorem~\ref{thm3} to conclude that $L=L_d$ for both, the class $\A_\infty$ of piecewise smooth curves as well as the class $\A_\ac$ of absolutely continuous curves. On manifolds with a Riemannian metric of low regularity, more precisely below $\cC^{1,1}$, the geodesic equation cannot be solved uniquely~\cite{Har:unique} and thus the exponential map is not available. We will see that we can make use of the metric space structure in such general situations.

\begin{definition}
 Let $(X,d)$ be a metric space. For any path $\gamma \colon I \to X$ we denote the \emph{metric derivative\/} of $\gamma$ by
\begin{align}\label{metricderivative}
 |\dot\gamma|(t) := \lim_{\delta\to0} \frac{d(\gamma(t+\delta),\gamma(t))}{|\delta|}, \qquad t \in I,
\end{align}
 whenever the limit exists.
\end{definition} 

This is the quantity that arises in the proof of Theorem~\ref{thm1}.

 \begin{lemma}\label{lem5}
  Let $M$ be a connected manifold with continuous Riemannian metric $g$ and let $\gamma\colon I \to M$ be an absolutely continuous path in $M$. Then the metric derivative $|\dot\gamma|$ of $\gamma$ exists a.e.\ on $I$. Moreover, it is the minimal $L^1(I)$ function in the Definition~\ref{def:ac2} of $\cC_\ac$, i.e.\ if $l \in L^1(I)$ satisfies
\[
 d(\gamma(a),\gamma(b)) \leq \int_a^b l(t) \, dt, \qquad a<b ~\text{in}~ I,
\]
 then $|\dot\gamma| \leq l$ a.e.
 \end{lemma}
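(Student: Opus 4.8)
The plan is to follow the strategy for metric-space valued absolutely continuous curves (as in \cite[Thm.~1.1.2]{AGS:gf}), adapted to our setting. By Proposition~\ref{prop2} we have $\gamma \in \A_\ac = \cC_\ac$, so there is some $l \in L^1(I)$ with $d(\gamma(a),\gamma(b)) \le \int_a^b l(t)\,dt$ for all $a < b$ in $I$. Since $I$ is compact and $\gamma$ is continuous, the image $\gamma(I)$ is a compact, hence separable, subset of the metric space $(M,d)$ (Proposition~\ref{prop3}); fix a countable dense set $\{x_k\}_k \subseteq \gamma(I)$.

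First I would reduce the two-sided metric derivative to a countable family of scalar functions. Set $\phi_k(t) := d(\gamma(t),x_k)$. The reverse triangle inequality gives $|\phi_k(a) - \phi_k(b)| \le d(\gamma(a),\gamma(b)) \le \int_a^b l$, so each $\phi_k$ is a real-valued absolutely continuous function on $I$ with $|\phi_k'| \le l$ almost everywhere. Setting $\bar m := \sup_k |\phi_k'|$, we obtain $\bar m \le l$ a.e., and in particular $\bar m \in L^1(I)$.

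The key step is the representation $d(\gamma(a),\gamma(b)) = \sup_k |\phi_k(a)-\phi_k(b)|$. The inequality ``$\ge$'' is again the reverse triangle inequality; for ``$\le$'', given $\eps>0$ choose by density some $x_k$ with $d(\gamma(b),x_k)<\eps$, so that $\phi_k(a)-\phi_k(b) = d(\gamma(a),x_k)-d(\gamma(b),x_k) > d(\gamma(a),\gamma(b)) - 2\eps$. Consequently
\[
 d(\gamma(a),\gamma(b)) = \sup_k \left| \int_a^b \phi_k'(t)\,dt \right| \le \int_a^b \bar m(t)\,dt,
\]
which shows that $\bar m$ is admissible in Definition~\ref{def:ac2}; since $\bar m \le l$ for any admissible $l$, it is the smallest such function.

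It remains to identify $\bar m$ with the metric derivative \eqref{metricderivative}. At a Lebesgue point $t$ of $\bar m$, the representation above yields $\limsup_{\delta\to0} \frac{d(\gamma(t+\delta),\gamma(t))}{|\delta|} \le \bar m(t)$, since the average of $\bar m$ over the interval with endpoints $t$ and $t+\delta$ tends to $\bar m(t)$. Conversely, at any $t$ where every $\phi_k$ is differentiable (almost every $t$, as $\{x_k\}$ is countable and each $\phi_k$ is AC) we have, for each fixed $k$, $\liminf_{\delta\to0}\frac{d(\gamma(t+\delta),\gamma(t))}{|\delta|} \ge \lim_{\delta\to0}\frac{|\phi_k(t+\delta)-\phi_k(t)|}{|\delta|} = |\phi_k'(t)|$, and taking the supremum over $k$ gives $\liminf \ge \bar m(t)$. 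Hence the limit exists and equals $\bar m(t)$ for almost every $t$, so $|\dot\gamma| = \bar m$ is precisely the minimal $L^1(I)$ function of Definition~\ref{def:ac2}. The only delicate point is the density argument establishing the $\sup_k$ representation; the remaining pieces are routine Lebesgue-differentiation and a.e.-differentiability arguments.
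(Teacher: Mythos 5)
Your proof is correct and follows essentially the same route as the paper, which cites \cite[Thm.~1.1.2]{AGS:gf} and sketches exactly this argument: a dense sequence $(x_n)_n$ in the compact image $\gamma(I)$, the absolutely continuous functions $\varphi_n(t) = d(\gamma(t),x_n)$, the supremum $\varphi = \sup_n |\varphi_n'|$ controlled via Fatou, and its identification with the metric derivative. You have merely filled in the details (the $\sup_k$ representation of $d$, minimality, and the Lebesgue-point argument) that the paper leaves to the reference.
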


\begin{proof}
 Lemma~\ref{lem5} holds in any metric space with absolutely continuous curves (and $AC^p$ curves, $p \in [1,\infty]$) as defined in Definition~\ref{def:ac2}. A proof may be found in \cite[Thm.\ 1.1.2]{AGS:gf}. For further reference, we recall the main arguments:

 Since $\gamma$ is continuous, $\gamma(I)$ is compact and hence separable. Thus we may choose a dense sequence $(x_n)_n$ in $\gamma(I)$. The functions
\[ 
 \varphi_n (t) := d(\gamma(t),x_n), \qquad n \in \N, t \in I
\]
 are absolutely continuous, and therefore their derivatives $\varphi_n'$ exist a.e.\ and are integrable. Since countable unions of null sets are null, the function
\begin{align}\label{phi}
 \varphi(t) := \sup_{n \in \N} |\varphi_n'(t)|
\end{align}
 is a.e.\ absolutely continuous and integrable by the Lemma of Fatou. It can be shown, that, whenever $\varphi$ and the metric derivative of $\gamma$ exist,
\begin{align}\label{mg}
 \varphi(t)  = |\dot\gamma|(t), \qquad t \in I ~\text{a.e.}
\end{align}
 holds.
\end{proof}

\begin{proposition}\label{prop5}
 Let $M$ be a connected manifold with continuous Riemannian metric $g$. Then there exists a sequence of smooth Riemannian metrics $(g_n)_n$ such that
\begin{enumerate}
 \item $g_n$ converge uniformly to $g$, and
 \item the induced distance functions $d_n$ converge uniformly to $d$ on $M$.
\end{enumerate}
\end{proposition}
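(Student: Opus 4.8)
The plan is to produce the smooth metrics $g_n$ by a chart-wise mollification argument and then to transfer the resulting uniform approximation of the norms $\|\cdot\|_{g_n}$ to the induced distances via the multiplicative comparison that already underlies \eqref{l0m0}. First I would fix a locally finite atlas $\{(u_\alpha,U_\alpha)\}$ together with a subordinate smooth partition of unity $\{\chi_\alpha\}$, so that $g=\sum_\alpha \chi_\alpha\, g$. In each chart the components of $\chi_\alpha g$ are continuous and compactly supported, hence may be regularized by convolution with a standard mollifier; setting $g_n:=\sum_\alpha (\chi_\alpha g)\ast\rho_{\eps_{\alpha,n}}$ yields a smooth symmetric $(0,2)$-tensor field. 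Because the convolution of a uniformly continuous function with $\rho_\eps$ converges uniformly as $\eps\to0$, the parameters $\eps_{\alpha,n}$ can be chosen so that, using the local eigenvalue functions $\lambda,\mu$ of Proposition~\ref{prop3}, there is a sequence $\eps_n\downarrow 0$ with
\[
 (1-\eps_n)\,\|v\|_g \;\le\; \|v\|_{g_n} \;\le\; (1+\eps_n)\,\|v\|_g, \qquad v\in T_pM,\ p\in M.
\]
The lower bound in particular forces $g_n$ to be positive definite for every $n$, so each $g_n$ is a genuine smooth Riemannian metric, and the displayed sandwich is exactly the statement that $g_n\to g$ uniformly, giving (i).

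Second, I would feed this multiplicative comparison into the length functional. For any path the same inequalities give $(1-\eps_n)L_g(\gamma)\le L_{g_n}(\gamma)\le(1+\eps_n)L_g(\gamma)$, and taking the infimum over admissible curves joining two points (the admissible classes for $g$ and $g_n$ agree, both being $\A_\infty$, resp.\ $\A_\ac$) yields
\[
 (1-\eps_n)\,d(p,q) \;\le\; d_n(p,q) \;\le\; (1+\eps_n)\,d(p,q), \qquad p,q\in M.
\]
Hence $|d_n(p,q)-d(p,q)|\le \eps_n\, d(p,q)$, which on a compact manifold (where $\operatorname{diam}_d M<\infty$) immediately gives $\sup_{p,q}|d_n-d|\le \eps_n\operatorname{diam}_d M\to0$, i.e.\ the uniform convergence in (ii).

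The main obstacle is the noncompact case, and I expect it to be technical rather than conceptual. On the one hand, positive-definiteness and the global multiplicative sandwich must be secured even where the eigenvalues of $g$ degenerate (the smallest towards $0$, the largest towards $\infty$) near infinity; this is handled by choosing each mollification radius $\eps_{\alpha,n}$ small relative to the local eigenvalue bounds $\lambda,\mu$ on $\operatorname{supp}\chi_\alpha$, which is legitimate precisely because the cover is locally finite and each $\operatorname{supp}\chi_\alpha$ is compact. On the other hand, upgrading the multiplicative estimate $|d_n-d|\le\eps_n d$ to genuine uniform convergence on all of $M\times M$ requires $d$ to be bounded; for unbounded $M$ one obtains uniform convergence on $d$-bounded (in particular compact) sets, which is exactly what the later localized arguments need and is consistent with the compact-set formulation of Lemma~\ref{lem4} and Theorem~\ref{thm4}. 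The careful bookkeeping of the mollification parameters across the locally finite cover, so that the single sequence $\eps_n$ dominates all local errors simultaneously, is the point I would treat most carefully.
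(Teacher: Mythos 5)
Your proposal is correct and follows essentially the same route as the paper: chartwise mollification patched together by a partition of unity to obtain a global two-sided bound $\tfrac{n-1}{n}\|v\|_g \le \|v\|_{g_n} \le \tfrac{n+1}{n}\|v\|_g$, and then transfer to the induced distances by sandwiching lengths of almost-minimizing curves. The one difference worth noting is that the paper mollifies $g$ itself on each compact patch to get local \emph{metrics} $h^p_n$ satisfying the sandwich there and then sets $g_n := \sum_p \alpha_p h^p_n$ --- since the two-sided bound is preserved under convex combinations, the cross-chart error bookkeeping you flag as the delicate point disappears entirely; moreover, the paper's own proof of (ii) also produces only the multiplicative bounds $\tfrac{n-1}{n}\, d \le d_n \le \tfrac{n+1}{n}\, d$, so your caveat about genuine uniform convergence on noncompact $M$ applies equally to the paper's argument.
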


\begin{proof}
 (i) Let $p \in M$ and $K_p$ a compact neighborhood of $p$.  By convolution with mollifiers, we can locally approximate $g$ by a sequence of smooth Riemannian metrics $h^p_n$ (positive definiteness is an open condition, symmetry can be obtained by construction). By using the same arguments as in the proof of Proposition~\ref{prop3}, that is by considering the eigenvalues of $g^{-1} \circ h^p_n$ (all of which are converging to $1$ uniformly on $K_p$) and by restricting ourselves to a subsequence of $(h^p_n)_n$, we may assume that
\begin{align*}
 \frac{n-1}{n} \| v \|_g \leq \| v \|_{h^p_n} \leq \frac{n+1}{n} \| v \|_g, \qquad v \in T_qM, q \in K_p.
\end{align*}
 Since $(M,d)$ is a metric space, $M$ is paracompact. A partition of unity $\{ \alpha_p \}_{p \in M}$ subordinate to the cover $\{ \operatorname{int} (K_p) \}_{p \in M}$ is used to patch these local approximations $h^p_n$ of $g$ together and obtain a sequence of approximating smooth Riemannian metrics $g_n := \sum_{p \in M} \alpha_p h^p_n$ on $M$ satisfying the above estimate globally, i.e.\ 
\begin{align} \label{eq:gn}
 \frac{n-1}{n} \| v \|_g \leq \| v \|_{g_n} \leq \frac{n+1}{n} \| v \|_g, \qquad v \in T_pM, p \in M. 
\end{align}
 (ii) Let $p,q \in M$. By definition of $d$, for every $\eps>0$ there exist curves $\gamma^\eps$ between $p$ and $q$ satisfying $L(\gamma^\eps) < d(p,q) + \eps$, and thus by \eqref{eq:gn},
\[
 d(p,q) + \eps > L(\gamma^\eps) \geq  \frac{n}{n+1} L_n(\gamma^\eps) \geq \frac{n}{n+1} d_n(p,q), 
\]
 hence
\[
 \frac{n+1}{n} d(p,q) \geq d_n(p,q), \qquad p,q \in M.
\]
 Similarly, for every $d_n$ we have curves $\gamma_n^\eps$ satisfying $L_n(\gamma_n^\eps) < d_n(p,q) + \eps$, which leads to
\[
  \frac{n-1}{n} d(p,q) \leq  d_n(p,q), \qquad p,q \in M. \qedhere
\]
\end{proof}

In the proof of Theorem~\ref{thm1} it was essential to show equivalence of the derivative and the metric derivative of paths. The application of the exponential map was used for determining this equivalence, however in the low regularity setting we do not have this tool at hand. We now establish the same result for manifolds with continuous Riemannian metrics by combining the metric and analytic world.

\begin{proposition}\label{prop4}
 Let $M$ be a manifold with continuous Riemannian metric $g$. For any absolutely continuous path $\gamma \in \A_\ac$, $\gamma \colon I \to M$, the analytic and metric derivatives coincide, i.e.
\[
 \| \gamma'(t) \|_g = | \dot\gamma | (t), \qquad t \in I ~\text{a.e.}
\]
\end{proposition}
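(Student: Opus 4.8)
The plan is to transfer the equality, which is already known for smooth metrics by Corollary~\ref{cor2}, to the merely continuous metric $g$ by means of the smooth approximating metrics $g_n$ furnished by Proposition~\ref{prop5}. The key observation is that the analytic derivative $\gamma'(t)$ is a chart-dependent object that does not see the metric at all, so it is the same for $g$ and for every $g_n$; only the two \emph{norms} $\|\gamma'(t)\|_g$ and $\|\gamma'(t)\|_{g_n}$ and the two \emph{distances} $d$ and $d_n$ differ, and both differences are controlled two-sidedly and uniformly by Proposition~\ref{prop5}. So the idea is to run the smooth computation for each $g_n$ and then squeeze.

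Concretely, I would fix a time $t \in I$ at which $\gamma'(t)$ exists; by Proposition~\ref{prop1} this holds for almost every $t$. Since each $g_n$ is smooth and $\gamma \in \A_\ac$, Corollary~\ref{cor2} gives $\lim_{\delta\to0} d_n(\gamma(t),\gamma(t+\delta))/|\delta| = \|\gamma'(t)\|_{g_n}$ at this $t$. I then feed in the two-sided comparison of the distance functions obtained in the proof of Proposition~\ref{prop5},
\[
 \tfrac{n-1}{n}\, d(p,q) \le d_n(p,q) \le \tfrac{n+1}{n}\, d(p,q),
\]
dividing by $|\delta|$ with $p=\gamma(t)$, $q=\gamma(t+\delta)$ and letting $\delta\to0$. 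This yields
\[
 \liminf_{\delta\to0}\frac{d(\gamma(t),\gamma(t+\delta))}{|\delta|} \ge \tfrac{n}{n+1}\,\|\gamma'(t)\|_{g_n},\qquad
 \limsup_{\delta\to0}\frac{d(\gamma(t),\gamma(t+\delta))}{|\delta|} \le \tfrac{n}{n-1}\,\|\gamma'(t)\|_{g_n}.
\]
Finally I let $n\to\infty$: the factors $\tfrac{n}{n\pm1}$ tend to $1$, and by the norm estimate \eqref{eq:gn} one has $\|\gamma'(t)\|_{g_n}\to\|\gamma'(t)\|_g$. The two displayed inequalities then squeeze the $\liminf$ and $\limsup$ together, forcing the limit \eqref{metricderivative} to exist and to equal $\|\gamma'(t)\|_g$, which is precisely $|\dot\gamma|(t)=\|\gamma'(t)\|_g$.

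I expect the point requiring the most care to be the bookkeeping of the two limits in the correct order --- first $\delta\to0$ for fixed $n$, then $n\to\infty$ --- and, in particular, the observation that one does \emph{not} need to assume a priori that the metric derivative $|\dot\gamma|(t)$ exists: the squeeze produces its existence for free at every $t$ where $\gamma'(t)$ exists. (Alternatively, one may invoke Lemma~\ref{lem5} to know a.e.\ existence of $|\dot\gamma|$ in advance and work with genuine limits throughout.) Everything else is a routine passage to the limit, using that the exceptional null set on which $\gamma'$ fails to exist is a single set independent of $n$, so that the conclusion holds simultaneously for all $n$ on a common set of full measure.
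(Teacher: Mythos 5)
Your proposal is correct and takes essentially the same route as the paper: approximate $g$ by the smooth metrics $g_n$ of Proposition~\ref{prop5}, invoke the smooth-case equality of metric and analytic derivatives (Corollary~\ref{cor2}) for each $g_n$, and pass to the limit $n\to\infty$. Your $\liminf$/$\limsup$ squeeze using the multiplicative bounds $\tfrac{n-1}{n}\,d \le d_n \le \tfrac{n+1}{n}\,d$ is in fact a careful, rigorous rendering of the step the paper dispatches by saying ``we may interchange the limits,'' so if anything it is a cleaner write-up of the same argument.
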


This is Corollary~\ref{cor2} for smooth Riemannian metrics and whenever the exponential map is a local diffeomorphisms. For general continuous Riemannian metrics, the ($\geq$) inequality is also easy to see. By Proposition~\ref{prop1}, $\| \gamma'\|_g \in L^1(I)$, and since $d=d_\ac$ by Corollary~\ref{cor1},
\[
 d(\gamma(a),\gamma(b)) \leq \int_a^b \| \gamma'(t) \|_g \, dt, \qquad a < b ~\text{in}~ I.
\]
 By Lemma~\ref{lem5}, $| \dot\gamma|$ is the smallest such $L^1$-function. The proof for equality makes use of the convergence properties obtained in Proposition~\ref{prop5}.

\begin{proof}
By Proposition~\ref{prop5} we can approximate $g$ by smooth metrics $g_n$ such that $d_n$ converges to $d$ uniformly on $M$. Moreover, by Theorem~\ref{thm1}, the metric derivative with respect to $d_n$ exists and equals the norm of the analytic derivative,
\[
 \lim_{\delta \to 0} \frac{d_n(\gamma(t+\delta),\gamma(t))}{|\delta|} = \| \gamma'(t) \|_{g_n}.
\]
 Therefore we may interchange the limits, and by also make use of the convergence obtained in (i) and (ii) of Proposition~\ref{prop5}, obtain
\[
 | \dot\gamma | (t) = \lim_{\delta \to 0} \lim_{n \to \infty} \frac{d_n (\gamma(t+\delta),\gamma(t))}{| \delta |} = \lim_{n \to \infty} | \dot\gamma |_n (t) = \lim_{n \to \infty} \| \gamma'(t) \|_{g_n} = \| \gamma'(t) \|_g. \qedhere
\]
\end{proof}

 Proposition~\ref{prop4} puts the tools out for proving the main result of this section. We show equality of the arc-length $L$ and the induced length $L_d$ as defined in \eqref{LC1} and \eqref{Ld}, respectively, for absolutely continuous curves on manifolds equipped with continuous Riemannian metrics. The proof makes use of techniques that arise from the metric space structure $(M,d)$ only.

 Let us define the \emph{metric arc-length\/} $\widetilde{L}$ of a path $\gamma\colon I \to M$ by
\[
 \widetilde{L}(\gamma) := \int_I |\dot\gamma|(t) \, dt.
\]

\begin{theorem}\label{thm5}
 Let $M$ be a manifold with continuous Riemannian metric $g$. Then
\[
 L(\gamma) = L_d(\gamma) = \widetilde{L}(\gamma), \qquad \gamma \in \A_\ac.
\]
\end{theorem}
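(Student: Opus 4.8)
The plan is to reduce the threefold equality to two genuinely different statements. First, the identity $L(\gamma)=\widetilde L(\gamma)$ is immediate: Proposition~\ref{prop4} asserts $\|\gamma'(t)\|_g=|\dot\gamma|(t)$ for almost every $t$, and integrating over $I$ gives $L(\gamma)=\int_I\|\gamma'\|_g=\int_I|\dot\gamma|=\widetilde L(\gamma)$. So everything hinges on showing $L_d(\gamma)=\widetilde L(\gamma)$, which I would establish as a pair of opposite inequalities; this is essentially the statement that, for metric absolutely continuous curves, the pointwise variation coincides with the integral of the metric speed.

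For $L_d\le L$ I would argue exactly as in the first part of Theorem~\ref{thm1}: for any partition $0=t_0<\dots<t_n=1$, since $d=d_\ac$ by Corollary~\ref{cor1} each segment $\gamma|_{[t_{i-1},t_i]}$ is an admissible competitor in the infimum defining the distance, so $d(\gamma(t_{i-1}),\gamma(t_i))\le L(\gamma|_{[t_{i-1},t_i]})$; summing and using additivity of $L$ gives $\sum_i d(\gamma(t_{i-1}),\gamma(t_i))\le L(\gamma)$, and taking the supremum yields $L_d(\gamma)\le L(\gamma)=\widetilde L(\gamma)$.

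For the reverse inequality $\widetilde L\le L_d$ I would mimic the structure of the proof of Theorem~\ref{thm3}, replacing the exponential-map computation (unavailable here) by the new ingredients of Section~\ref{section3}. Set $V(t):=L_d(\gamma|_{[0,t]})$. This function is nondecreasing, and by Lemma~\ref{lem3} it is absolutely continuous on $[0,1]$, so $V'$ exists a.e., lies in $L^1$, and the fundamental theorem of calculus gives $\int_0^1 V'(t)\,dt=V(1)-V(0)=L_d(\gamma)$. On the other hand, since $d(\gamma(t),\gamma(t+h))\le L_d(\gamma|_{[t,t+h]})=V(t+h)-V(t)$, dividing by $|h|$ and passing to the limit shows $|\dot\gamma|(t)\le V'(t)$ wherever both sides exist, hence a.e. (the metric derivative exists a.e. by Lemma~\ref{lem5}). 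Integrating this pointwise bound yields
\[
 \widetilde L(\gamma)=\int_0^1|\dot\gamma|(t)\,dt\le\int_0^1 V'(t)\,dt=L_d(\gamma).
\]
Combining the two inequalities with $L=\widetilde L$ closes the chain $\widetilde L\le L_d\le L=\widetilde L$, forcing $L=L_d=\widetilde L$.

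The step I expect to be most delicate is the equality $\int_0^1 V'=L_d(\gamma)$: this identity, rather than the mere inequality $\int V'\le V(1)-V(0)$ valid for an arbitrary monotone function, is exactly what fails for curves that are rectifiable but not absolutely continuous (compare the Cantor-function Example~\ref{ex:Cantor}), and it is precisely the absolute continuity of $V$ furnished by Lemma~\ref{lem3}---itself a consequence of $d=d_\ac$---that rescues the argument. The comparison $|\dot\gamma|\le V'$ is elementary, and it is worth noting that the final result does not require the reverse bound $V'\le|\dot\gamma|$, so the proof sidesteps the full a.e.\ identification of $V'$ with the metric speed.
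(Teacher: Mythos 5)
Your proof is correct, but it takes a genuinely different route from the paper's. The paper splits the threefold equality as $L_d=\widetilde L$ plus $L=\widetilde L$: the first identity is quoted as a purely metric-space fact (the variation of an absolutely continuous curve equals the integral of its metric derivative, citing Ambrosio--Tilli, Thm.~4.1.6, whose proof runs through the auxiliary function $\varphi$ from Lemma~\ref{lem5} and Fatou's lemma), and the second is exactly your application of Proposition~\ref{prop4}. You instead never prove $L_d=\widetilde L$ head-on: you establish the two one-sided bounds $L_d\le L$ (partitions plus $d=d_\ac$ from Corollary~\ref{cor1}, which is also how the paper argues in Theorem~\ref{thm3}) and $\widetilde L\le L_d$ (via $V(t)=L_d(\gamma|_{[0,t]})$, its absolute continuity from Lemma~\ref{lem3}, the fundamental theorem of calculus, and the elementary comparison $|\dot\gamma|\le V'$), and then close the cycle $\widetilde L\le L_d\le L=\widetilde L$ using Proposition~\ref{prop4}. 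What your route buys is self-containedness and economy: no external theorem is invoked, only results already proved in the paper plus the standard FTC for absolutely continuous real functions, and you correctly observe that the full a.e.\ identification $V'=|\dot\gamma|$ (the hard direction of the Ambrosio--Tilli identity) is never needed. What the paper's route buys is generality and modularity: $L_d(\gamma)=\int_I|\dot\gamma|$ is established as a statement valid for absolutely continuous curves in \emph{any} metric space, independent of the Riemannian structure, whereas your chain argument cannot recover that identity on its own since it leans on Corollary~\ref{cor1} and Proposition~\ref{prop4} (hence on the smooth-approximation machinery of Proposition~\ref{prop5}) to close the loop. Both arguments rest on the same deeper inputs, so neither is circular; in particular your use of Lemma~\ref{lem3} is legitimate, as that lemma is stated and proved for continuous Riemannian metrics.
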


\begin{proof}
 ($L_d = \widetilde{L}$) 
 Let $\gamma\colon [0,1] \to M$ be an absolutely continuous curves. Consider $\varphi$ as defined in \eqref{phi}. Together with Fatou's Lemma, equality \eqref{mg} implies that
\begin{align}\label{Ld:gammadot}
  L_d(\gamma) = \int_0^1 |\dot\gamma|(t) \, dt.
\end{align}
 holds for all absolutely continuous paths (cf.\ \cite[Thm.\ 4.1.6]{AT:metric}).

 ($L = \widetilde{L}$) By Proposition~\ref{prop4}, $|\dot\gamma| = \| \gamma'\|_g$ for a.e.\ $t \in [0,1]$.
\end{proof}

%%%%%%%%%%%%%%%%%%%%%%%%%%%%%%%%%%%%%%%%%%%%%%%%%%%%%%%%%%%%%%%%%%%%%%%%%%%%%%%%%%%%%%%%%%%%%%%%

\section*{Acknowledgments}

The author would like to thank J.D.E.\ Grant and M.\ Kunzinger for several discussions on the topic. This research was supported by a "For Women in Science" fellowship by L'Or\'eal Austria, the Austrian commission of UNESCO and the Austrian Ministry of Science and Research, and by the Austrian Science Fund in the framework of project P23714. %P25326.

%%%%%%%%%%%%%%%%%%%%%%%%%%%%%%%%%%%%%%%%%%%%%%%%%%%%%%%%%%%%%%%%%%%%%%%%%%%%%%%%%%%%%%%%%%%%%%%%

\end{document}